\documentclass[11pt]{amsart}
\usepackage{amsmath}
\usepackage{amsfonts}
\usepackage{amssymb,enumerate}
\usepackage{amsthm}
\usepackage{hyperref}
\usepackage{centernot}
\usepackage{stmaryrd}


\theoremstyle{plain}
\newtheorem{theorem}{Theorem}[section]
\newtheorem{lemma}[theorem]{Lemma}
\newtheorem{definition}[theorem]{Definition}

\newtheorem{question}[theorem]{Question}

\begin{document}

\title{Set mappings on 4-tuples}
\author{Shahram Mohsenipour}
\author{Saharon Shelah}
\address{School of Mathematics, Institute for Research in Fundamental Sciences (IPM)
        P. O. Box 19395-5746, Tehran, Iran}\email{sh.mohsenipour@gmail.com}

\address{The Hebrew University of Jerusalem, Einstein Institute of Mathematics,
Edmond J. Safra Campus, Givat Ram, Jerusalem 91904, Israel}

\address{Department of Mathematics, Hill Center - Busch Campus, Rutgers, The State
University of New Jersey, 110 Frelinghuysen Road, Piscataway, NJ 08854-8019,
USA}\email{shelah@math.huji.ac.il}

\thanks{The research of the first author was in part supported by a grant from IPM (No. 94030403). The research of the second author was partially supported by European Research Council grant 338821.  This is paper 1072 in Shelah's list of publications.}

\subjclass[2000]{03E05, 03E35}
\keywords{Combinatorial set theory, Set mappings}
\begin{abstract} In this paper we study set mappings on 4-tuples. We continue a previous work of Komjath and Shelah by getting new finite bounds on the size of free sets in a generic extension. This is obtained by an entirely different forcing construction. Moreover we prove a ZFC result for set mappings on 4-tuples and also as another application of our forcing construction we give a consistency result for set mappings on triples.
\end{abstract}
\maketitle
\bibliographystyle{amsplain}
\section{Introduction}
In this paper we continue the paper of Komjath-Shelah \cite{shelah645}. Our main objects of study here are set mappings which are, for our current purpose, functions of the type $f\colon[\lambda]^{k}\rightarrow[\lambda]^{<\mu}$ for some natural number $k\geq 1$ and cardinals $\lambda$, $\mu$, which satisfy $f(\bar{x})\cap \bar{x}=\emptyset$ for every $\bar{x}\in[\lambda]^{k}$. The motivation in this part of combinatorial set theory is to know how large free sets exist. A subset $H$ of $\lambda$ is called free if $f(\bar{x})\cap H=\emptyset$ for every $\bar{x}\in[H]^{k}$. The case $k=1$ was settled by Hajnal in \cite{hajnalconjecture} where he showed
that if $\mu<\lambda$, then there is a free set of size $\lambda$ (we call it Theorem A for later references). The case $k>1$ came to attention when Kuratowski and Sierpinski proved that for set mappings on $[\lambda]^{k}$ there always exists a free set of size $k+1$ if and only if $\lambda\geq\mu^{+k}$ (see \cite{ehmr}, Section 45). It is interesting to know that assuming GCH, Erd\"{o}s and Hajnal have shown that if $\lambda\geq\mu^{+k}$, then there is a free set of cardinality $\mu^{+}$ \cite{ehmr}. Coming back to ZFC, Hajnal and M\'{a}t\'{e} \cite{hajnalmate} and later Hajnal (\cite{ehmr}, Section 46) managed to improve Kuratowski and Sierpinski's result for the cases $k=2,3$ by showing that if $\lambda\geq\mu^{+k}$, then $f$ has arbitrary large finite sets (we call it Theorem B for the case k=3). The case $k=4$ remained open until Komjath-Shelah in \cite{shelah645} showed that it is consistent with ZFC that there are set mappings with finite bounds on their size of free sets. More precisely, suppose $\mu$ is a regular cardinal, $\lambda=\mu^{+n}$, $n\in\omega$ and GCH holds for every $\mu^{+l}$ ($l<n$), i.e., $2^{\mu^{+l}}=\mu^{+(l+1)}$, then there exist a natural number $t_{n}$ and a cardinal preserving generic extension in which there is a set mapping $f\colon[\lambda]^{4}\rightarrow[\lambda]^{<\mu}$ with no free set of size $t_{n}$. The bound $t_{n}$ is a Ramsey number which is defined inductively to be the least natural number satisfying the Ramsey relation $t_{n}\rightarrow(t_{n-1},7)^{5}$, $t_{0}=5$, hence $n\mapsto t_{n}$ is essentially the tower function, i.e., iterating exponentiation $n$-times. In general the Ramsey relation $a\rightarrow(b,c)^{r}$ means that the following statement is true. Whenever the $r$-element subsets of an $a$-element set are colored with two colors, say 0 and 1, then either there exists a $b$-element with all its $r$-tuples colored with 0 or there exits a $c$-element subset whose all $r$-tuples colored with 1. Subsequently Komjath-Shelah in the same paper \cite{shelah645} proved another independence result to show that Hajnal-M\'{a}t\'{e} and Hajnal results for cases $k=2,3$ are optimal. In precise terms, let $\mu$ be a regular cardinal, $\lambda=\mu^{+n}, n\in\omega$, and GCH holds for every $\mu^{+l}$ ($l<n$), then there is a cardinal preserving generic extension in which there exits a set mapping $f\colon[\lambda]^{2}\rightarrow[\lambda]^{<\mu}$ with no free set of size $\omega$. This easily implies the similar result for $k=3$. Coming back to the case $k=4$ we should say that this is not the end of the story. Now we definitely have the task of improving the bound $t_{n}$ in front of us which would be more serious if we notice that the following question is open:

\begin{question}[Gillibert-Wehrung \cite{gillibert-wehrung}]\label{gilibert-wehrung} Suppose $\mu$ is an infinite cardinal and $\lambda=\mu^{+4}$. Does any set mapping $f\colon[\lambda]^{4}\rightarrow[\lambda]^{<\mu}$ have a free set of size 7?
\end{question}
Recall that the Kuratowski-Sierpinski Theorem guarantees the existence of a free set of size 5. In fact Hajnal and M\'{a}t\'{e} \cite{hajnalmate} had asked in 1975 whether the above set mapping $f\colon[\lambda]^{4}\rightarrow[\lambda]^{<\mu}$ has a free set of size 6. This question had remained open until 2008 when Gillibert by using algebraic tools of completely different nature, gave a positive answer to it \cite{gillibert-thesis}. We refer the reader to \cite{gillibert-wehrung} for a self contained proof of this fact and other interesting results.

Now we are ready to briefly describe what we have done in this paper. In Section 2 we obtain another bound $s_n$ instead of $t_n$ with a different forcing construction. The new bound $s_n$ is also a Ramsey number and is defined to be the least natural number satisfying $s_n\rightarrow(5)^{3}_{3^{n+1}}$ and essentially is triple exponentiation. In general for natural numbers $a,b,c$ and $r$, the relation $a\rightarrow(b)^{r}_{c}$ means that the following assertion is true. Whenever the $r$-element subsets of an $a$-element set are colored with $c$ colors, then there is a $b$-element subset whose $r$-element subsets have the same color. Note that by the Erd\"{o}s and Hajnal theorem quoted above, GCH must fail in the generic extension. We manage to control our construction in such a way that in the generic extension we will have $2^{\mu}=2^{\mu^{+}}$ $=\dots=2^{\mu^{+(n-1)}}=\lambda$. In Section 3 we will compare the two bounds $s_n$ and $t_n$ asymptotically by using Ramsey theory to show that $s_n$ is much better than $t_n$ when $n$ tends to infinity. Motivated by our main results in Section 2, we consider set mappings mappings on 4-tuples with a restriction on the location of the image in Section 4 and get a ZFC result. In fact in both of the forcing constructions in this paper and in Komjath-Shelah's paper \cite{shelah645}, we find that the constructed set mappings $f\colon[\lambda]^{4}\rightarrow[\lambda]^{<\mu}$ in the generic extension have this property: for all $x_0<x_1<x_2<x_3$ in $[\lambda]^{4}$, $f\{x_0,x_1,x_2,x_3\}\subset(x_1,x_2)$. By modifying Hajnal's proof of Theorem B we show that this is necessary, more precisely we show that if $\lambda\geq\mu^{+3}$, then any set mapping $f\colon[\lambda]^{4}\rightarrow[\lambda]^{<\mu}$ with the additional property, for all $x_0<x_1<x_2<x_3$ in $[\lambda]^{4}$, $f\{x_0,x_1,x_2,x_3\}\cap(x_1,x_2)=\emptyset$, will have arbitrary large finite free sets. In Section 5 we consider a similar situation for set mapping on triples. This time we are motivated by Komjath-Shelah's second construction in \cite{shelah645} and then we will deal with set mappings on triples with a restriction on the location of the image. As an application of our forcing in Section 2, we obtain a negative consistency result.\\

\noindent$\mathbf{Notation.}$ Cardinals are identified with initial ordinals. If $S$ is a set and $\kappa$ is a cardinal, then $[S]^{\kappa}=\{X\subset S:|X|=\kappa\}$, $[S]^{<\kappa}=\{X\subset S:|X|<\kappa\}$, $[S]^{\leq\kappa}=\{X\subset S:|X|\leq\kappa\}$. If $A$ and $B$ are subsets of an ordered set, then $A<B$ means that $x<y$ whenever $x\in A$ and $y\in B$. Let $F\colon[A]^{k}\rightarrow B$ be a function on finite subsets, we always write $F\{a_{1},\dots,a_{k}\}$ rather than $F(\{a_{1},\dots,a_{k}\})$. If $\lambda$ is a cardinal and $\alpha\in\lambda$, then by $(\alpha,+\infty)$ we mean $\{x\in\lambda:x>\alpha\}$.

\section{The Main Theorems}\label{mainsection}

We begin with a definition and two simple lemmas.
\begin{definition} \label{def1}Suppose $A$ is a set, $k\geq 2$, $F\colon[A]^{k}\rightarrow\mathcal{P}\left({A}\right)$ is a set mapping and $\Gamma$ is a set of functions of the form $\rho\colon[A]^{2}\rightarrow L_{\rho}$ where $L_{\rho}$ is a linear ordering with the order $<_{\rho}$; pedantically $\Gamma$ is the set of pairs $(\rho,L_{\rho})$ for $\rho\in\Gamma$. We say that $\Gamma$ $k$-generates $F$ if far all $\bar{\gamma}\in[A]^{k}$ we have $x\in F(\bar{\gamma})$ if and only if $x\in A$ and
\begin{center}
$(\forall\rho\in\Gamma)(\forall\gamma\in\bar{\gamma})(\exists\gamma^{'},\gamma^{''}\in\bar{\gamma})\,\bigr{[}\,\gamma^{'}\neq\gamma^{''}\,\wedge\,\,
\rho\{x,\gamma\}\leq_{\rho}\rho\{\gamma^{'},\gamma^{''}\}\bigr{]}$.
\end{center}
\end{definition}
We note that in this paper all the linear orderings $L_{\rho}$ will be some cardinals with their natural $\in$-orderings. It is evident from the above definition that for any $\Gamma$ as above and any $k\geq 2$ there is a unique set mapping $F\colon[A]^{k}\rightarrow\mathcal{P}\left({A}\right)$ such that $\Gamma$ $k$-generates $F$, so we may denote it by $F=F_{\,\Gamma}$.
\begin{definition}
In \ref{def1}, suppose $B\subset A$, by $\Gamma|_{B}$ we mean $\{\rho|_{B}:\rho\in\Gamma\}$.
\end{definition}
The following two very easy lemmas will be very useful in the sequel.
\begin{lemma}\label{lemma} Let $F_{1}\colon[A_{1}]^{k}\rightarrow\mathcal{P}\left({A_{1}}\right)$, $F_{2}\colon[A_{2}]^{k}\rightarrow\mathcal{P}\left({A_{2}}\right)(k\geq2)$ be set mappings with $A_{1}\subset A_{2}$. Let $\Gamma_{1},\Gamma_{2}$ be sets of functions such that $\Gamma_{i}$ $k$-generates $F_{i}$ $(i=1,2)$ and $\Gamma_{2}|_{A_{1}}\subset\Gamma_{1}$, then $\forall\bar{\gamma}\in[A_{1}]^{k}$ we have $F_{1}(\bar{\gamma})\subset F_{2}(\bar{\gamma})$.
\end{lemma}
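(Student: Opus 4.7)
The plan is essentially a direct unpacking of Definition \ref{def1}, exploiting the two hypotheses $A_1\subset A_2$ and $\Gamma_2|_{A_1}\subset\Gamma_1$. Fix $\bar{\gamma}\in[A_1]^{k}$ and take an arbitrary $x\in F_1(\bar{\gamma})$; I want to verify that $x$ satisfies the characterization of membership in $F_2(\bar{\gamma})$.

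First, $x\in A_1\subset A_2$, so the ambient set-condition is immediate. The substantive step is to check, for every $\rho\in\Gamma_2$ and every $\gamma\in\bar{\gamma}$, the existence of distinct $\gamma',\gamma''\in\bar{\gamma}$ with $\rho\{x,\gamma\}\leq_{\rho}\rho\{\gamma',\gamma''\}$. The key observation is that since $x,\gamma,\gamma',\gamma''$ all lie in $A_1$, the values $\rho\{x,\gamma\}$ and $\rho\{\gamma',\gamma''\}$ coincide with the values of the restriction $\rho|_{A_1}$ at the same pairs, and the linear order $L_{\rho}$ (with its comparison $\leq_{\rho}$) is unchanged by restriction. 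So the required inequality for $\rho$ is literally the same statement as the corresponding inequality for $\rho|_{A_1}$.

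Now by the hypothesis $\Gamma_2|_{A_1}\subset\Gamma_1$ we have $\rho|_{A_1}\in\Gamma_1$, and by $x\in F_1(\bar{\gamma})$ the defining condition of Definition \ref{def1} holds for $\rho|_{A_1}$: for every $\gamma\in\bar{\gamma}$ there exist distinct $\gamma',\gamma''\in\bar{\gamma}$ with $\rho|_{A_1}\{x,\gamma\}\leq_{\rho}\rho|_{A_1}\{\gamma',\gamma''\}$. Translating back via the previous observation, this is exactly the condition for $\rho$, and since $\rho\in\Gamma_2$ was arbitrary, we conclude $x\in F_2(\bar{\gamma})$.

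There is no real obstacle here; the only thing to be careful about is the pedantic point already flagged in Definition \ref{def1}, namely that $\Gamma$ is treated as a set of pairs $(\rho,L_{\rho})$, so that $\rho|_{A_1}$ inherits the same target linear order $L_{\rho}$. Once that is kept in mind, the verification is mechanical, and $F_1(\bar{\gamma})\subset F_2(\bar{\gamma})$ follows for every $\bar{\gamma}\in[A_1]^{k}$.
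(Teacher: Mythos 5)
Your proof is correct and is exactly the intended argument: the paper states this lemma without proof (calling it ``very easy''), and the direct unpacking of Definition~\ref{def1} --- instantiating the universal quantifier over $\Gamma_1$ at $\rho|_{A_1}\in\Gamma_2|_{A_1}\subset\Gamma_1$ and noting that $\rho$ and $\rho|_{A_1}$ agree on pairs from $A_1$ --- is precisely what is expected. Nothing is missing.
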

\begin{lemma}\label{lemma2}
Let $F_{1}\colon[A_{1}]^{k}\rightarrow\mathcal{P}\left({A_{1}}\right)$, $F_{2}\colon[A_{2}]^{k}\rightarrow\mathcal{P}\left({A_{2}}\right)(k\geq2)$ be set mappings with $A_{1}\subset A_{2}$. Let $\Gamma_{1},\Gamma_{2}$ be sets of functions such that $\Gamma_{i}$ $k$-generates $F_{i}$ $(i=1,2)$, $\Gamma_{2}|_{A_{1}}=\Gamma_{1}$, then $\forall\bar{\gamma}\in[A_{1}]^{k}$ we have $F_{1}(\bar{\gamma})=F_{2}(\bar{\gamma})\cap A_{1}$.
\end{lemma}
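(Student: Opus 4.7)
The plan is to verify the two inclusions separately; the $\subset$ direction follows for free from Lemma \ref{lemma}, while the $\supset$ direction requires unfolding the generation condition and using that every element in play lies in $A_1$.

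First, since the hypothesis $\Gamma_2|_{A_1} = \Gamma_1$ in particular yields $\Gamma_2|_{A_1} \subset \Gamma_1$, Lemma \ref{lemma} gives $F_1(\bar\gamma) \subset F_2(\bar\gamma)$ for every $\bar\gamma \in [A_1]^k$. Combined with the tautology $F_1(\bar\gamma) \subset A_1$ (the codomain of $F_1$ is $\mathcal{P}(A_1)$), this yields $F_1(\bar\gamma) \subset F_2(\bar\gamma) \cap A_1$.

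For the reverse inclusion, fix $\bar\gamma \in [A_1]^k$ and take $x \in F_2(\bar\gamma) \cap A_1$. I want to verify the defining clause of Definition \ref{def1} with respect to $\Gamma_1$ and thereby conclude $x \in F_1(\bar\gamma)$. So let $\rho_1 \in \Gamma_1$ and $\gamma \in \bar\gamma$ be arbitrary. By the equality $\Gamma_2|_{A_1} = \Gamma_1$, there is some $\rho \in \Gamma_2$ with $\rho_1 = \rho|_{A_1}$ and with the same linear order $L_{\rho_1} = L_{\rho}$. Applying the generation condition for $F_2$ to $x \in F_2(\bar\gamma)$ at this $\rho$ and $\gamma$, there exist $\gamma', \gamma'' \in \bar\gamma$ with $\gamma' \neq \gamma''$ and
\[
\rho\{x,\gamma\} \leq_\rho \rho\{\gamma',\gamma''\}.
\]
Because $x, \gamma, \gamma', \gamma''$ all lie in $A_1$, the values of $\rho$ at $\{x,\gamma\}$ and at $\{\gamma',\gamma''\}$ coincide with those of $\rho_1$, and the two orders agree. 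Hence the displayed inequality transfers verbatim to $\rho_1\{x,\gamma\} \leq_{\rho_1} \rho_1\{\gamma',\gamma''\}$, which is exactly the condition required to place $x$ in $F_1(\bar\gamma)$.

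There is no real obstacle here; the only thing one must be careful about is that the linear orderings attached to each $\rho$ are preserved under restriction, which is built into the definition of $\Gamma|_B$ (the elements of $\Gamma$ are pairs $(\rho,L_\rho)$, and restriction acts only on the first coordinate). The argument is symmetric and essentially a bookkeeping step that will be invoked repeatedly in the forcing construction of Section \ref{mainsection}.
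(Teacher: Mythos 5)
Your proof is correct. The paper gives no proof of this lemma at all (it is dismissed as ``very easy''), and your argument --- deducing the inclusion $F_1(\bar\gamma)\subset F_2(\bar\gamma)\cap A_1$ from Lemma \ref{lemma} together with $F_1(\bar\gamma)\subset A_1$, and then unfolding Definition \ref{def1} for the reverse inclusion, using that $x,\gamma,\gamma',\gamma''$ all lie in $A_1$ so the values and orderings of $\rho$ and $\rho|_{A_1}$ agree --- is exactly the intended verification.
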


\begin{theorem}\label{main}
Assume that $n<\omega$, $k\geq 2$ and $\lambda=\mu^{+n}$ for some regular cardinal $\mu=\mu^{<\mu}$. Suppose
\begin{center}
$2^{\mu}=\mu^{+}, 2^{\mu^{+}}=\mu^{++},\dots,2^{\mu^{+(n-1)}}=\mu^{+n}$ (when $n>0$).
\end{center}
Then there is a notion of forcing $\mathbb{P}_{n}=\mathbb{P}_{n}^{\mu,\lambda}$ such that $|\mathbb{P}_{n}|=\lambda$, $\mathbb{P}_{n}$ is $(<\mu)$-complete and collapses no cardinal and in $V^{\mathbb{P}_{n}}$:
\begin{itemize}
\item[(i)]there are functions
\begin{center}
$\rho_{0}\colon[\lambda]^{2}\rightarrow\mu^{+n}, \rho_{1}\colon[\lambda]^{2}\rightarrow\mu^{+(n-1)},\dots,\rho_{n}\colon[\lambda]^{2}\rightarrow\mu$
\end{center}
such that if $f\colon[\lambda]^{k}\rightarrow\mathcal{P}(\lambda)$ is $k$-generated by $\Gamma=\{\rho_{0},\dots,\rho_{n}\}$, then $\forall\bar{\gamma}\in[\lambda]^{k}$ we have $|f(\bar{\gamma})|<\mu$. This means that $f\colon[\lambda]^{k}\rightarrow[\lambda]^{<\mu}$.
\item[(ii)]$2^{\mu}=2^{\mu^{+}}=\dots=2^{\mu^{+(n-1)}}=\lambda.$
\end{itemize}
\end{theorem}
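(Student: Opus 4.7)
The plan is to define $\mathbb{P}_n$ as a forcing whose conditions approximate the $n+1$ functions simultaneously: a condition is a tuple $p = (a^p, \rho_0^p, \ldots, \rho_n^p)$ with $a^p \in [\lambda]^{<\mu}$ and partial functions $\rho_i^p \colon [a^p]^2 \to \mu^{+(n-i)}$, subject to a \emph{coherence requirement} designed precisely so that the set mapping $F_{\Gamma^p}$ that $\Gamma^p = \{\rho_0^p,\ldots,\rho_n^p\}$ $k$-generates on $a^p$ satisfies $|F_{\Gamma^p}(\bar\gamma)| < \mu$ for every $\bar\gamma \in [a^p]^k$. The ordering is coordinate-wise extension. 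The general strategy will then be to verify the forcing-theoretic bookkeeping, transport the coherence requirement from conditions to the generic via Lemmas \ref{lemma} and \ref{lemma2}, and finish off with a cardinal-arithmetic calculation.

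First I would dispose of the forcing-theoretic properties. The cardinality $|\mathbb{P}_n| = \lambda$ is a direct count: $|[\lambda]^{<\mu}| = \lambda$ using $\mu = \mu^{<\mu}$ and the GCH hypothesis, and for each $a^p$ the number of choices for each partial $\rho_i^p$ is bounded by $\lambda$ in the same way. The $(<\mu)$-completeness is immediate from regularity of $\mu$: the union of a decreasing sequence of length $< \mu$ has domain of size $< \mu$, and the coherence requirement, being a statement about finite configurations, is preserved under such unions. The $\mu^+$-chain condition is obtained by a $\Delta$-system argument: thin any $\mu^+$-many conditions to a family with a common root and common order-type of $a^p \setminus r$ (only $\mu^{<\mu} = \mu$ such types), then show that any two such conditions are compatible by \emph{amalgamating} the partial functions on the cross-pairs in a way that extends coherence. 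Cardinal preservation follows automatically from these two properties.

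Next, for clause (i), I would take $\bar\gamma \in [\lambda]^k$ in $V^{\mathbb{P}_n}$ and use $\mu^+$-cc plus $(<\mu)$-completeness to reduce to $\bar\gamma \in V$. A density argument shows that $f(\bar\gamma)$ is fully determined by the conditions $q$ with $\bar\gamma \cup \{x\} \subseteq a^q$: by Lemma \ref{lemma2}, $x \in f(\bar\gamma)$ iff $x \in F_{\Gamma^q}(\bar\gamma)$, and the coherence requirement imposes $|F_{\Gamma^q}(\bar\gamma) \cap a^q| < \mu$. A $\mu^+$-cc counting then lifts this to $|f(\bar\gamma)| < \mu$ in the extension. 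For clause (ii), the upper bound $2^{\mu^{+l}} \leq \lambda$ follows from counting nice names (by $\mu^+$-cc each has size $\leq \mu^{+l}$, and $\lambda^{\mu^{+l}} = \lambda$ under the GCH assumption in $V$); the lower bound $2^{\mu^{+l}} \geq \lambda$ comes from the $\lambda$ distinct initial segments of, say, $\rho_n$ on $[\alpha]^2$ for $\alpha < \lambda$, which yield $\lambda$ pairwise distinct subsets of a set of size $\mu^{+l}$.

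The hard part will be formulating the coherence requirement: strong enough to force $|F(\bar\gamma)| < \mu$, yet weak enough to admit the $\Delta$-system amalgamation underlying $\mu^+$-cc. This is really the conceptual content of the construction, because $\rho_n$ alone (range $\mu$) cannot possibly have sub-level sets of size $< \mu$ in $\lambda > \mu$ for cardinality reasons, so control of $F(\bar\gamma)$ must come from the coordinated interaction of \emph{all} $n+1$ resolutions $\mu, \mu^+, \ldots, \lambda$. Getting amalgamation to work at every level simultaneously, so that cross-pair $\rho_i$-values between the two halves of a $\Delta$-system can be assigned while preserving coherence at every resolution, is where essentially all of the combinatorial work lives, and I would expect this to drive the precise shape of the coherence condition.
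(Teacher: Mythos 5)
Your proposal is a plan rather than a proof: the entire content of the theorem is hidden in the unspecified ``coherence requirement,'' and you say yourself that formulating it and making the amalgamation work is where all the work lives. The paper resolves exactly this difficulty by \emph{not} adding all $n+1$ functions with one forcing over $V$. It proceeds by induction on $n$: the cases $n=0,1$ are done outright in ZFC ($\rho_0\{x,y\}=\max\{x,y\}$, and $\rho_1$ a standard function on $[\mu^+]^2$ with small sub-level sets), and the step from $n$ to $n+1$ applies the induction hypothesis with $\mu^+$ in place of $\mu$ and then forces, \emph{over the resulting model} $W=V^{\mathbb{P}_n^{\mu^+,\lambda}}$, with a single poset $\mathbb{P}$ whose conditions are triples $\langle s,g,\varrho\rangle$ with $s\in[\lambda]^{<\mu}$, $\varrho\colon[s]^2\to\mu$, and $g$ the map $k$-generated by $\{\rho_0|_s,\dots,\rho_n|_s,\varrho\}$; the order requires $g$ not to grow on old $k$-tuples. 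No extra coherence clause is needed: the induction hypothesis already gives $|f(\bar\gamma)|\le\mu$ for the map generated by $\rho_0,\dots,\rho_n$, and the new $\varrho$ thins this to size $<\mu$. This is also exactly what makes the $\Delta$-system argument go through --- on the root $a$, each $g_i(\bar\gamma)$ is a subset of the old $f(\bar\gamma)$ of size $<\mu$, so with $\mu^{<\mu}=\mu$ there are only $\mu$ possibilities for $g_i|_{[a]^k}$ and $\varrho_i|_{[a]^2}$, and two conditions are amalgamated by giving $\varrho$ on cross pairs a value above everything previously used, which keeps new points out of $g(\bar\gamma)$ for old $\bar\gamma$.

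Two of your concrete claims fail as stated. First, your single forcing is not $\mu^+$-cc: since $\rho_0^p$ takes values in $\mu^{+n}=\lambda$ and the order is coordinate-wise extension, conditions with the same two-element domain but distinct $\rho_0$-values on that pair form an antichain of size $\lambda$; your thinning step (``only $\mu^{<\mu}=\mu$ such types'') is wrong because the number of possible restrictions of $\rho_0^p$ to the root is $\lambda^{<\mu}$, not $\mu$. (In the paper only the function with range $\mu$ is added generically at each stage, which is why the counting works; the full iteration $\mathbb{P}_{n+1}$ is itself not $\mu^+$-cc, and cardinal preservation comes from the interleaving of completeness and chain conditions along the iteration.) Second, your lower bound in (ii) does not follow from the set-mapping forcing itself: restrictions of $\rho_n$ to $[\alpha]^2$ only witness $2^{\mu^{+l}}\ge\mu^{+(l+1)}$, and the paper has to append an explicit Cohen-type poset $\mathbb{Q}$ of small partial functions from $\lambda\times\mu$ to $2$ at each stage of the iteration to arrange $2^{\mu}=\dots=2^{\mu^{+(n-1)}}=\lambda$.
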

\begin{proof}
By induction on $n$. We first prove the case $n=0$ in ZFC. Set $\rho_{0}\colon[\lambda]^{2}\rightarrow\lambda$ by $\rho_{0}\{x,y\}=\max\{x,y\}$ for all $x,y\in\lambda$. Let $f\colon[\lambda]^{k}\rightarrow\mathcal{P}(\lambda)$ is $k$-generated by $\{\rho_{0}\}$. It is very easy to check that $\forall\bar{\gamma}\in[\lambda]^{k}$ we have $|f(\bar{\gamma})|<\lambda$.

We are also able to prove the case $n=1$ in ZFC. Set $\rho_{0}$ as in the previous case, $\rho_{0}\{x,y\}=\max\{x,y\}$. It is well known that there is a function $\rho_{1}\colon [\lambda]^{2}\rightarrow\mu$ such that if $\alpha<\beta<\mu^{+}=\lambda$ and $\nu<\mu$, then $|\bigr\{\xi\leq\alpha:\rho_{1}\{\xi,\alpha\}\leq\nu\bigr\}|<\mu$\,\,($\clubsuit$). Now let $\bar{\gamma}\in[\lambda]^{k}$. Set $v^{*}=\max\bigr\{\rho_{1}\{\gamma,\gamma^{'}\}:\gamma,\gamma^{'}\in\bar{\gamma}\bigr\}$, $\gamma^{*}=\max\bar{\gamma}$. Since $f$ is $k$-generated by $\{\rho_{0},\rho_{1}\}$, we have $f(\bar{\gamma})\subset\bigr\{x\leq\gamma^{*}:\rho_{1}\{x,\gamma^{*}\}\leq\nu^{*}\bigr\}$. Now ($\clubsuit$) implies that $|\bigr\{x\leq\gamma^{*}:\rho_{1}\{x,\gamma^{*}\}\leq\nu^{*}\bigr\}|<\mu$, thus $|f(\bar{\gamma})|<\mu$.

Now assume that the theorem is true for $n\geq1$. We prove it for $n+1$. Recall that $\mu$ is a regular cardinal, $\mu=\mu^{<\mu}$, $2^{\mu}=\mu^{+},\dots,2^{\mu^{+n}}=\mu^{+(n+1)}=\lambda$. Obviously $\mu^{+}={(\mu^{+})}^{<(\mu^{+})}$, so by the induction hypothesis there is a notion of forcing $\mathbb{P}_{n}=\mathbb{P}_{n}^{\mu^{+},\lambda}$ such that $\mathbb{P}_{n}$ is $(<\mu^{+})$-complete, of cardinality $\lambda$ and collapses no cardinal and in $W=V^{\mathbb{P}_{n}}$:
\begin{itemize}
\item[(i)]there are functions
\begin{center}
$\rho_{0}\colon[\lambda]^{2}\rightarrow\mu^{+(n+1)}, \rho_{1}\colon[\lambda]^{2}\rightarrow\mu^{+n},\dots,\rho_{n}\colon[\lambda]^{2}\rightarrow\mu^{+}$
\end{center}
such that if $f\colon[\lambda]^{k}\rightarrow\mathcal{P}(\lambda)$ is $k$-generated by $\{\rho_{0},\dots,\rho_{n}\}$, then $\forall\bar{\gamma}\in[\lambda]^{k}$ we have $|f(\bar{\gamma})|<\mu^{+}$.
\item[(ii)]$2^{\mu^{+}}=\dots=2^{\mu^{+n}}=\lambda.$
\end{itemize}
We define a forcing notion $\mathbb{P}$ in $W=V^{\mathbb{P}_{n}}$ which is $(<\mu)$-complete and has the $\mu^{+}$-c.c. such that for a generic $G\subset\mathbb{P}$ over $W$ we can find $\rho_{n+1}\colon[\lambda]^{2}\rightarrow\mu$ in the generic extension $W[G]$ so that if  $f\colon[\lambda]^{k}\rightarrow\mathcal{P}(\lambda)$ is $k$-generated by $\{\rho_{0},\dots,\rho_{n},\rho_{n+1}\}$, then $\forall\bar{\gamma}\in[\lambda]^{k}$ we have $|f(\bar{\gamma})|<\mu$. Let $(\mathbb{P},\leq)$ consists of the triples of the form $\langle s,g,\varrho\rangle$ when $s\in[\lambda]^{<\mu}$, $\varrho\colon[s]^{2}\rightarrow\mu$, $g\colon[s]^{k}\rightarrow\mathcal{P}(s)$ and $\{\rho_{0}|_{s},\rho_{1}|_{s},\dots,\rho_{n}|_{s},\varrho\}$ $k$-generates $g$. For facility in notation we sometimes denote the condition $p=\langle s,g,\varrho\rangle$ by $p=\langle s_{p},g_{p},\varrho_{p}\rangle$.

For $p=\langle s,g,\varrho\rangle$ and $p^{'}=\langle s^{'},g^{'},\varrho^{'}\rangle$ in $(\mathbb{P},\leq)$ we say that $p^{'}$ extends $p$ $(p^{'}\leq p)$ if and only if $s^{'}\supset s$, $g^{'}|_{[s]^{k}}=g$ and $\varrho^{'}|_{[s]^{2}}=\varrho$.\\

\emph{Claim 1.} $(\mathbb{P},\leq)$ is ($<\mu$)-complete.\\

\emph{Proof of Claim 1.} Let $\eta$ be a cardinal $<\mu$ and $\bigr{\langle} p_{i}=\langle s_{i},g_{i},\varrho_{i}\rangle;i<\eta\bigr{\rangle}$ be a decreasing sequence of conditions in $\mathbb{P}$. Set
\[
s=\bigcup_{i<\eta}s_{i},\,\,\,g=\bigcup_{i<\eta}g_{i},\,\,\,\varrho=\bigcup_{i<\eta}\varrho_{i}.
\]
We show that $p=\langle s,g,\varrho\rangle$ is a condition in $\mathbb{P}$ extending all $p_{i},i<\eta$. By regularity of $\mu$ we have $|s|<\mu$. Now suppose $g^{'}:[s]^{k}\rightarrow\mathcal{P}(s)$ is the function $k$-generated by $\{\rho_{0}|_{s},\dots,\rho_{n}|_{s},\varrho\}$. We prove $g=g^{'}$. Let $\bar{\gamma}\in[s]^{k}$ and also let $i_{0}<\eta$ be such that $\bar{\gamma}\in[s_{i_{0}}]^{k}$. Lemma \ref{lemma2} says that for all $i_{0}\leq i<\eta$, $g^{'}(\bar{\gamma})\cap s_{i}=g_{i}(\bar{\gamma})$. Thus
\begin{eqnarray*}
g^{'}(\bar{\gamma})=g^{'}(\bar{\gamma})\cap s=g^{'}(\bar{\gamma})\cap(\bigcup_{i_{0}\leq i<\eta}s_{i})&=&\bigcup_{i_{0}\leq i<\eta}(g^{'}(\bar{\gamma})\cap s_{i})\\                                     &=&\bigcup_{i_{0}\leq i<\eta}g_{i}(\bar{\gamma})=g(\bar{\gamma}).
\end{eqnarray*}
Also by definition of $p$, it is clear that $p$ extends all $p_{i},i<\eta$. This proves Claim 1.\\

\emph{Claim 2.} $(\mathbb{P},\leq)$ has $\mu^{+}$-c.c.\\

\emph{Proof of Claim 2.} Assume that $p_{i}=\langle s_{i},g_{i},\varrho_{i}\rangle \in (\mathbb{P},\leq)$ for $i<\mu^{+}$. Using a standard $\Delta$-system argument we can suppose that $s_{i}=a\cup b_{i}$ for the pairwise disjoint sets $a, b_{i},i<\mu^{+}$. Let $f\colon[\lambda]^{k}\rightarrow\mathcal{P}(\lambda)$ be $k$-generated by $\{\rho_{0},\dots,\rho_{n}\}$. By Lemma \ref{lemma} for any $\bar{\gamma}\in[a]^{k}$, $g_{i}(\bar{\gamma})$ is a subset of $f(\bar{\gamma})$ and by definition $g_{i}(\bar{\gamma})$ is of cardinality $<\mu$. As $|f(\bar{\gamma})|\leq\mu$ (by the induction hypothesis) and $|a|<\mu$ we can assume, by $\mu^{<\mu}=\mu$, that $g_{i}|_{[a]^{k}}$ is the same for $i<\mu^{+}$. Also considering the fact that range$(\varrho_{i})\subset\mu$ for $i<\mu^{+}$, we can assume that $\varrho_{i}|_{[a]^{2}}$ is the same for $i<\mu^{+}$.

Now we will show that for any $i,j<\mu^{+}$, $p_{i}$ and $p_{j}$ are compatible. Set $s=s_{i}\cup s_{j}=a\cup b_{i}\cup b_{j}$. We intend to find $q=\langle s, g, \varrho\rangle\in\mathbb{P}$ such that $q$ extends both $p_{i},p_{j}$. Define $\varrho\colon[s]^{2}\rightarrow\mu$ by $\varrho\supset\varrho_{i},\varrho_{j}$ and for $\{\alpha_{1},\alpha_{2}\}\in[s]^{2}-[s_{i}]^{2}-[s_{j}]^{2}$,
\begin{equation}\label{rho}
\varrho\{\alpha_{1},\alpha_{2}\}=\sup\bigr{(}\varrho_{i}^{''}[s_{i}]^{2}\cup\varrho_{j}^{''}[s_{j}]^{2}\bigr{)}+1.
\end{equation}
Note that since $\mu$ is regular and $|s_{i}|,|s_{j}|<\mu$, the sets $\varrho_{i}^{''}[s_{i}]^{2}$, $\varrho_{j}^{''}[s_{j}]^{2}$ are bounded in $\mu$, so the above definition is well defined. Now let $g\colon[s]^{k}\rightarrow\mathcal{P}(s)$ be the function $k$-generated by $\{\rho_{0}|_{s},\dots,\rho_{n}|_{s},\varrho\}$. This completes the definition of $q=\langle s, g, \varrho\rangle$. Clearly $q\in\mathbb{P}$. We must show that $q$ extends $p_{i},p_{j}$. By symmetry it is enough to show $q$ extends $p_{i}$. As $s_{i}=a\cup b_{i}\subset s$ and $\varrho_{i}\subset\varrho$, this would be done if we could show $\forall\bar{\gamma}\in[s_{i}]^{k}\,\, g_{i}(\bar{\gamma})=g(\bar{\gamma})$. Lemma \ref{lemma} tells us that $g_{i}(\bar{\gamma})\subset g(\bar{\gamma})$. By definition,  $g(\bar{\gamma})\subset s$, $g_{i}(\bar{\gamma})\subset s_{i}$ and from Lemma \ref{lemma2} it follows that $g_{i}(\bar{\gamma})=g(\bar{\gamma})\cap s_{i}$. So it remains to show that $g(\bar{\gamma})\cap (s_{j}\setminus s_{i})=\emptyset$. By way of contradiction assume there is $x\in g(\bar{\gamma})\cap b_{j}$. Note that if $\bar{\gamma}\in[a]^{k}$, then $x\in g_{j}(\bar{\gamma})=g_{i}(\bar{\gamma})\subset s_{i}$ which implies that $x\in b_{j}\cap s_{i}$. This contradicts the disjointness of $s_{i},b_{j}$. So suppose there exists $\gamma \in \bar{\gamma}$ such that $\gamma\notin a$. This means that
\[
\{x,\gamma\}\in[s]^{2}-[s_{i}]^{2}-[s_{j}]^{2},
\]
then by (\ref{rho}) we have $\varrho\{x,\gamma\}>\sup\varrho_{i}^{''}[s_{i}]^{2}$. In other words for all $\{\gamma^{'},\gamma^{''}\}\in[\bar{\gamma}]^{2}$ we have $\varrho\{x,\gamma\}>\varrho\{\gamma^{'},\gamma^{''}\}$ which implies that $x\notin g(\bar{\gamma})$, a contradiction. Thus we have proved that $\forall\bar{\gamma}\in[s_{i}]^{k}\,\, g_{i}(\bar{\gamma})=g(\bar{\gamma})$ and this shows that $q$ extends $p_{i}$ which completes the proof of Claim 2.\\

Now let $G$ be a $\mathbb{P}$-generic filter over $W$. \\

\emph{Claim 3.} $\displaystyle{\bigcup_{p\in G}}s_{p}=\lambda$.\\

\emph{Proof of Claim 3.} Obviously $\bigcup_{p\in G}s_{p}\subset\lambda$. For any $\alpha\in \lambda$, let $D_{\alpha}=\{p\in\mathbb{P}:\alpha\in s_{p}\}$. We will show that $D_{\alpha}$ is dense in $\mathbb{P}$. Assume that $q=\langle s,g,\varrho\rangle\in\mathbb{P}$ with $\alpha\notin s$. Let $s^{'}=s\cup\{\alpha\}$. We define $\varrho^{'}\colon[s_{1}]^{2}\rightarrow\mu$ as follows. $\varrho^{'}|_{s}=\varrho$ and for $\{\alpha,\gamma\}\in[s^{'}]^{2}-[s]^{2}$, set $\varrho^{'}\{\alpha,\gamma\}=\sup(\varrho^{''}[s]^{2})+1$. By regularity of $\mu$ and $|s|<\mu$ this is well defined. Now let $g^{'}\colon[s^{'}]^{2}\rightarrow\mathcal{P}(s^{'})$ be the function $k$-generated by $\{\rho_{0}|_{s^{'}},\dots,\rho_{n}|_{s^{'}},\varrho^{'}\}$. So $p=\langle s^{'},g^{'},\varrho^{'}\rangle\in\mathbb{P}\cap D_{\alpha}$. We show that $p\leq q$. Obviously $s\subset s^{'}$, $\varrho\subset\varrho^{'}$. Suppose $\bar{\gamma}\in[s]^{k}$, we must prove that $g(\bar{\gamma})=g^{'}(\bar{\gamma})$. By Lemmas \ref{lemma} and \ref{lemma2} we have $g(\bar{\gamma})\subset g^{'}(\bar{\gamma})$, $g(\bar{\gamma})=g^{'}(\bar{\gamma})\cap s$. Thus it remains to show that $\alpha\notin g^{'}(\bar{\gamma})$. This is so because for each $\gamma\in\bar{\gamma}$ and each $\{\gamma^{'},\gamma^{''}\}\in[\bar{\gamma}]^{2}$, the definition of $\varrho^{'}$ implies that $\varrho^{'}\{\alpha,\gamma\}>\varrho^{'}\{\gamma^{'},\gamma^{''}\}$. This proves the density of $D_{\alpha}$ from which it follows that $\exists r\in D_{\alpha}\cap G$. So $\alpha\in s_{r}\subset\bigcup_{p\in G}s_{p}$. Therefore $\bigcup_{p\in G}s_{p}=\lambda$ and Claim 3 is proved.\\

Now we are ready to introduce $\rho_{n+1}\colon[\lambda]^{2}\rightarrow\mu$ and $f\colon[\lambda]^{k}\rightarrow\mathcal{P}(\lambda)$. Set
\[
\rho_{n+1}=\bigcup_{p\in G}\varrho_{p}, \,\,\,f=\bigcup_{p\in G}g_{p}.
\]

\noindent We show that\\

\noindent (1) $\forall\bar{\gamma}\in[\lambda]^{k}|f(\bar{\gamma})|<\mu$,

\noindent (2) $f$ is $k$-generated by $\{\rho_{0},\dots,\rho_{n+1}\}$.\\

\noindent Let $\bar{\gamma}\in[\lambda]^{k}$ and $p\in G$ such that $\bar{\gamma}\in[s_{p}]^{k}$. Then $f(\bar{\gamma})=g_{p}(\bar{\gamma})\subset s_{p}\in[\lambda]^{<\mu}$. This proves (1). Now let $f^{'}\colon[\lambda]^{k}\rightarrow\mathcal{P}(\lambda)$ be the function $k$-generated by $\{\rho_{0},\dots,\rho_{n+1}\}$. Let $\bar{\gamma}\in[\lambda]^{k}$ and $p\in G$ such that $\bar{\gamma}\in[s_{p}]^{k}$. By Lemma \ref{lemma2}, we have $f^{'}(\bar{\gamma})\cap s_{p}=g_{p}(\bar{\gamma})$. But since $G$ is a filter, for every such $p,q\in G$, $g_{p}(\bar{\gamma})=g_{q}(\bar{\gamma})$. Putting together this with $\bigcup_{p\in G}s_{p}=\lambda$ we can deduce (2).  In other words
\begin{eqnarray*}
f^{'}(\bar{\gamma})=f^{'}(\bar{\gamma})\cap\lambda=f^{'}(\bar{\gamma})\cap\bigcup_{p\in G}s_{p}&=&\bigcup_{p\in G}\bigr{(}f^{'}(\bar{\gamma})\cap s_{p}\bigr{)}\\
                                                                                               &=&\bigcup_{p\in
                                                                                               G}g_{p}(\bar{\gamma})=f(\bar{\gamma}).\end{eqnarray*}
Now let $\dot{\mathbb{P}}$ be a $\mathbb{P}_{n}$-name for $\mathbb{P}$. Then $\mathbb{P}_{n}\ast{\dot{\mathbb{P}}}$ is $(<\mu)$-complete and collapses no cardinal and has cardinality $\lambda$ (recall the definition of $\mathbb{P}$ in $V^{\mathbb{P}_{n}}$ and notice that $V^{\mathbb{P}_{n}}\models \lambda^{<\lambda}=\lambda$) and by what we have done so far $V^{\mathbb{P}_{n}\ast{\dot{\mathbb{P}}}}$ satisfies the requirement (i) of the theorem for $n+1$. Now observe that for $l=1,\dots,n$ we have $V^{\mathbb{P}_{n}}\models({\lambda})^{\mu^{+l}}=(2^{\mu^{+}})^{\mu^{+l}}={2}^{\mu^{+}}=\lambda$. Since $\mathbb{P}$ has the $\mu^{+}$-c.c. in $V^{\mathbb{P}_{n}}$, it is well known that we have $V^{\mathbb{P}_{n}\ast{\dot{\mathbb{P}}}}\models({2})^{\mu^{+l}}=\lambda$\,\,$(\heartsuit_{1})$. This implies that for $k=0,\dots,n$, $V^{\mathbb{P}_{n}\ast{\dot{\mathbb{P}}}}\models(\lambda)^{\mu^{+k}}=(2^{\mu^{+}})^{\mu^{+k}}=\lambda$\,\,$(\heartsuit_{2})$. Also by $(<\mu)$-completeness of $\mathbb{P}$ in $V^{\mathbb{P}_{n}}$ we have $V^{\mathbb{P}_{n}\ast{\dot{\mathbb{P}}}}\models2^{<\mu}=\mu$\,\,$(\heartsuit_{3})$. Assume that $\mathbb{Q}$ is the poset of all functions $p\colon\lambda\times\mu\rightarrow2$ in $V^{\mathbb{P}_{n}\ast\dot{\mathbb{P}}}$ such that $|p|<\mu$. Let $\dot{\mathbb{Q}}$ be a $(\mathbb{P}_{n}\ast\dot{\mathbb{P}})$-name for $\mathbb{Q}$. It is well known that from $\heartsuit_{2}$ for $k=0$ together with $\heartsuit_{3}$ we can deduce $V^{\mathbb{P}_{n}\ast\dot{\mathbb{P}}\ast\dot{\mathbb{Q}}}\models2^{\mu}=\lambda$. Since $\mathbb{Q}$ has the $\mu^{+}$-c.c. in $V^{\mathbb{P}_{n}\ast\dot{\mathbb{P}}}$, from $\heartsuit_{2}$ it follows that $V^{\mathbb{P}_{n}\ast{\dot{\mathbb{P}}}\ast{\dot{\mathbb{Q}}}}\models({2})^{\mu^{+l}}=\lambda$ for $l=1,\dots,n$ (as in the case of $\heartsuit_{1}$). Now we define $\mathbb{P}_{n+1}=\mathbb{P}_{n+1}^{\mu,\lambda}={\mathbb{P}_{n}}^{\!\!\!\mu^{+}\!\!,\lambda}\ast{\dot{\mathbb{P}}}\ast{\dot{\mathbb{Q}}}$. Surely $\mathbb{P}_{n+1}$ is $(<\mu)$-complete and collapses no cardinal and has cardinality $\lambda$ and satisfies both the requirements (i), (ii) of the theorem for $n+1$. So we finished the proof.
\end{proof}

It is interesting to note that the referee has suggested an alternate exposition of the proof of Theorem \ref{main}. In fact the proof of Theorem \ref{main} can be split into two steps, first proving that if ($\mu=\mu^{<\mu}$), $2\leq k<\omega$, $F\colon[\lambda]^{k}\rightarrow[\lambda]{\leq\mu}$, then a $(<\mu)$-complete cardinal-preserving forcing adds a function $h^{*}\colon [\lambda]^{2}\rightarrow\mu$ such that if for $x\in[\lambda]^{k}$
\[
F^{*}(x)=\{\xi\in F(x)\colon h^{*}(\xi,\gamma)\leq h^{*}(x)(\gamma\in x)\}
\]
then $|F^{*}(x)|<\mu$. Here $h^{*}(x)=\max\{h^{*}(y)\colon y\in[x]^{2}\}$. We also define a forcing notion $(Q,\leq)$ as follows. The elements are tuples $(s,h)$ such that $s\in[\lambda]^{<\mu}$, $h\colon[s]^{2}\rightarrow\mu$. Also $(s^{'},h^{'})\leq(s,h)$ if $s^{'}\supset s$, $h=h^{'}|[s]^{2}$ as well as there are no $x\in[s]^{k},\,\, \xi\in(F(x)\cap s^{'})-s$ such that for each $\gamma\in x,\,\, h^{'}(\xi , \gamma)\leq h(x)$. Then we argue as in the proof of Theorem \ref{main}. Finally Theorem \ref{main} can be obtained by iteration. We leave it to the reader to reproduce the proof via this approach.\\

The following definition is useful in presenting the proof of the next theorem.

\begin{definition}
Assume that $A$ is a set with $|A|>3$, $\langle L,<\rangle$ is a linear order and $\rho\colon[A]^{2}\rightarrow L$. Let $B\subset A$, $x\in A\setminus B$. We say that $x$ is $\rho$-close to $B$, if
\[
(\forall\gamma\in B)(\exists \gamma^{'},\gamma^{''}\in B) \bigr{[}(\gamma^{'}\neq\gamma^{''})\,\,\wedge\,\,\rho\{x,\gamma\}\leq\rho\{\gamma^{'},\gamma^{''}\}\bigr{]}.
\]
\end{definition}
\begin{theorem}
Assume that $n<\omega, k=4$ and $\lambda=\mu^{+n}$ for some regular cardinal $\mu$ and $s_{n}$ is the the least number satisfying the Ramsey relation $s_{n}\rightarrow(5)^{3}_{3^{n+1}}$. Let $\mathbb{P}_{n}=\mathbb{P}^{\mu,\lambda}_{n}$ be as in Theorem \ref{main}. Then in $V^{\mathbb{P}_{n}}$ there is a set mapping $F\colon[\lambda]^{4}\rightarrow[\lambda]^{<\mu}$ such that $F$ has no free set of cardinality $s_{n}$.
\end{theorem}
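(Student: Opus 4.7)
The plan is to show that any putative free set of size $s_n$ would, via the Ramsey partition relation $s_n\to(5)^3_{3^{n+1}}$, contain a highly homogeneous $5$-element subset inside which we can exhibit an element belonging to the $F$-image of the other four, contradicting freeness. Here $F\colon[\lambda]^4\to[\lambda]^{<\mu}$ is the set mapping $4$-generated (in the sense of Definition \ref{def1}) by $\Gamma=\{\rho_0,\ldots,\rho_n\}$ furnished by Theorem \ref{main}, and the bound $|F(\bar\gamma)|<\mu$ is clause (i) of that theorem.

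First I would fix a supposed free set $H\subseteq\lambda$ with $|H|=s_n$ and define a coloring $c\colon[H]^3\to\{01,02,12\}^{n+1}$ by letting, for $y_0<y_1<y_2$ in $H$ and each $i\leq n$, the coordinate $c\{y_0,y_1,y_2\}(i)=\alpha\beta$ record which pair $\{y_\alpha,y_\beta\}$ realises $\max\{\rho_i\{y_0,y_1\},\rho_i\{y_0,y_2\},\rho_i\{y_1,y_2\}\}$ (breaking ties lexicographically, so that the labelled pair is always a genuine maximum). Applying $s_n\to(5)^3_{3^{n+1}}$ yields a homogeneous $E=\{e_0<e_1<e_2<e_3<e_4\}\subseteq H$ with an associated pattern $(p_0,\ldots,p_n)\in\{01,02,12\}^{n+1}$; consequently, for every $\{a,b,c\}\subseteq\{0,1,2,3,4\}$ with $a<b<c$, the coordinate $p_i$ tells us which of $\rho_i\{e_a,e_b\}$, $\rho_i\{e_a,e_c\}$, $\rho_i\{e_b,e_c\}$ non-strictly dominates the other two.

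Next, I would claim $e_2\in F(\{e_0,e_1,e_3,e_4\})$. Unfolding Definition \ref{def1}, this amounts to checking, for each $i\leq n$ and each $\gamma\in\{e_0,e_1,e_3,e_4\}$, that there exist distinct $\gamma',\gamma''\in\{e_0,e_1,e_3,e_4\}$ with $\rho_i\{e_2,\gamma\}\leq\rho_i\{\gamma',\gamma''\}$. The key observation is that a single dominating pair inside $\{e_0,e_1,e_3,e_4\}$ serves uniformly for all four choices of $\gamma$: namely $\{e_0,e_1\}$ when $p_i=01$, $\{e_0,e_4\}$ when $p_i=02$, and $\{e_3,e_4\}$ when $p_i=12$. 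Each of the requisite inequalities is obtained by chaining homogeneity through one or two triples from $[E]^3$; for instance when $p_i=12$ one uses the triples $\{e_0,e_2,e_3\}$ and $\{e_2,e_3,e_4\}$ to get $\rho_i\{e_2,e_0\}\leq\rho_i\{e_2,e_3\}\leq\rho_i\{e_3,e_4\}$, and the other three choices of $\gamma$ are bounded by $\rho_i\{e_3,e_4\}$ already via the single triple $\{e_2,e_3,e_4\}$. Since $E\subseteq H$, this contradicts the freeness of $H$ for $F$.

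The main obstacle — really the reason $(5)^3$ suffices — is noticing that, among the five elements of $E$, the middle one $e_2$ is the distinguished choice for which, in every one of the three homogeneous patterns $p_i\in\{01,02,12\}$, a single pair within the remaining four points dominates all the values $\rho_i\{e_2,\gamma\}$; without this observation the whole scheme collapses. Once $e_2$ is fixed, tie-breaking in the coloring is harmless (every inequality invoked is non-strict), and the case analysis reduces to a handful of elementary inequalities read off from the homogeneity of $E$.
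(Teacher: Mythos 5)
Your proposal is correct and follows essentially the same route as the paper: the same $3^{n+1}$-coloring of $[H]^3$ recording which pair realizes the maximum of each $\rho_i$, the same application of $s_n\rightarrow(5)^3_{3^{n+1}}$ to extract a homogeneous $5$-set, and the same key observation that its middle element $e_2$ must lie in $F\{e_0,e_1,e_3,e_4\}$, contradicting freeness. One tiny slip in your illustrative case $p_i=12$: the choice $\gamma=e_1$ also requires a two-step chain (through the triples $\{e_1,e_2,e_3\}$ and $\{e_2,e_3,e_4\}$) rather than the single triple $\{e_2,e_3,e_4\}$, but your general remark that each inequality follows by chaining through one or two triples already covers this.
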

\begin{proof}By Theorem \ref{main}, in $V^{\mathbb{P}_{n}}$ there are functions $F\colon[\lambda]^{4}\rightarrow[\lambda]^{<\mu}$ and
\[
\rho_{0}\colon[\lambda]^{2}\rightarrow\mu^{+n},\,\dots\,,\,\rho_{n}\colon[\lambda]^{2}\rightarrow\mu
\]
such that $F$ is $4$-generated by $\Gamma=\{\rho_{0},\dots,\rho_{n}\}$. We shall show that $F$ has no free set of cardinality $s_{n}$. By way of contradiction suppose not and let $A=\{\gamma_{1},\dots,\gamma_{s_{n}}\}$ be a free set for $F$ with $s_{n}$ elements. We are going to define a partition relation $\sim$ on $[A]^{3}$. Let $\rho\in\Gamma$, $\{\alpha,\beta,\gamma\}\in[A]^{3}$ with $\alpha<\beta<\gamma$. At least one of the following three possibilities will occur:\\

$\mathrm{(1)}\,\,\,\max\bigr{\{}\rho\{\alpha,\beta\},\rho\{\beta,\gamma\}\bigr{\}}\leq\rho\{\alpha,\gamma\}$,\\

$\mathrm{(2)}\,\,\,\max\bigr{\{}\rho\{\alpha,\beta\},\rho\{\alpha,\gamma\}\bigr{\}}\leq\rho\{\beta,\gamma\}$,\\

$\mathrm{(3)}\,\,\,\max\bigr{\{}\rho\{\beta,\gamma\},\rho\{\alpha,\gamma\}\bigr{\}}\leq\rho\{\alpha,\beta\}$.\\

\noindent We say that $\{\alpha,\beta,\gamma\}$ has $\rho$-\textit{type} $l$ $(l=1,2,3)$, if the possibility $(l)$ occurs, and $l$ is minimal. Now for $\bar{\gamma}_{1},\bar{\gamma}_{2}\in[A]^{3}$, we put $\bar{\gamma}_{1}\sim\bar{\gamma}_{2}$ iff $\forall \rho\in\Gamma$, $\bar{\gamma}_{1},\bar{\gamma}_{2}$ have the same $\rho$-type. The number of equivalence classes is at most $3^{n+1}$, so from $|A|=s_{n}, s_{n}\rightarrow(5)^{3}_{3^{n+1}}$ it follows that there is a 5-element homogenous set $B=\{\alpha_{1},\alpha_{2},\alpha_{3},\alpha_{4},\alpha_{5}\}$. Assume that $\alpha_{1}<\alpha_{2}<\alpha_{3}<\alpha_{4}<\alpha_{5}$.\\

\textit{Claim.} For every $\rho\in\Gamma$, $\alpha_{3}$ is $\rho$-close to $\{\alpha_{1},\alpha_{2},\alpha_{4},\alpha_{5}\}$. \\

\textit{Proof of Claim.} Let $\rho\in\Gamma$, there are three cases to deal with:\\

\underline{Case 1}: For all $\bar{\gamma}\in[B]^{3}$, $\bar{\gamma}$ has $\rho$-type 1. In this case we have the following relations:
\begin{center}
$\rho\{\alpha_{3},\alpha_{1}\}\leq\rho\{\alpha_{1},\alpha_{4}\},\,\,\,\rho\{\alpha_{3},\alpha_{2}\}\leq\rho\{\alpha_{2},\alpha_{4}\},$
\end{center}
\begin{center}
$\rho\{\alpha_{3},\alpha_{4}\}\leq\rho\{\alpha_{4},\alpha_{1}\},\,\,\,\rho\{\alpha_{3},\alpha_{5}\}\leq\rho\{\alpha_{5},\alpha_{1}\}.$ \\
\end{center}

\underline{Case 2}: For all $\bar{\gamma}\in[B]^{3}$, $\bar{\gamma}$ has $\rho$-type 2. In this case we have the following relations:
\begin{center}
$\rho\{\alpha_{3},\alpha_{1}\}\leq\rho\{\alpha_{4},\alpha_{5}\},\,\,\,\rho\{\alpha_{3},\alpha_{2}\}\leq\rho\{\alpha_{4},\alpha_{5}\},$
\end{center}
\begin{center}
$\rho\{\alpha_{3},\alpha_{4}\}\leq\rho\{\alpha_{4},\alpha_{5}\},\,\,\,\rho\{\alpha_{3},\alpha_{5}\}\leq\rho\{\alpha_{4},\alpha_{5}\}.$ \\
\end{center}

\underline{Case 3}: For all $\bar{\gamma}\in[B]^{3}$, $\bar{\gamma}$ has $\rho$-type 3. In this case we have the following relations:
\begin{center}
$\rho\{\alpha_{3},\alpha_{1}\}\leq\rho\{\alpha_{1},\alpha_{2}\},\,\,\,\rho\{\alpha_{3},\alpha_{2}\}\leq\rho\{\alpha_{1},\alpha_{2}\},$
\end{center}
\begin{center}
$\rho\{\alpha_{3},\alpha_{4}\}\leq\rho\{\alpha_{1},\alpha_{2}\},\,\,\,\rho\{\alpha_{3},\alpha_{5}\}\leq\rho\{\alpha_{1},\alpha_{2}\}.$ \\
\end{center}

Therefore in all cases we have shown that $\alpha_{3}$ is $\rho$-close to $\{\alpha_{1},\alpha_{2},\alpha_{4},\alpha_{5}\}$. This proves Claim.

Now from Claim it follows that $\alpha_{3}\in F\{\alpha_{1},\alpha_{2},\alpha_{4},\alpha_{5}\}$ which violates the freeness of $A$, a contradiction.
\end{proof}

\section{Some Ramsey Considerations}

In this section we show that when $n$ tends to infinity, $s_n$ give us a better bound than $t_n$. First we fix our notation for representing Ramsey numbers:

\begin{eqnarray*}
&& R_{k}(l_{1},\dots,l_{r})=\min\bigr\{n_{0}:\mathrm{for}\,\,\, n\geq n_{0}\,\,\, n\rightarrow(l_{1},\dots,l_{r})^{k}\bigr\},\\
&& R_{k}(l;r)=\min\bigr\{n_{0}:\mathrm{for}\,\,\, n\geq n_{0}\,\,\, n\rightarrow(l)_{r}^{k}\bigr\},\\
&& R_{k}(l)=\min\bigr\{n_{0}:\mathrm{for}\,\,\, n\geq n_{0}\,\,\, n\rightarrow(l)^{k}\bigr\}.
\end{eqnarray*}
So in terms of the above notation we have $s_{n}=R_{3}(5;3^{n+1})$ and $t_{n+1}=R_{5}(t_{n},7)$. Now we state Erd\"{o}s and Rado's upper bound for $R_{k}(l;r)$. For this we need to define a binary operation $\ast$ on positive integers as follows:
\[
a\ast b=a^{b}.
\]
Also for $n\geq3$ we put
\[
a_{1}\ast a_{2} \ast\dots\ast a_{n}=a_{1}\ast(a_{2}\ast(\dots\ast(a_{n-1}\ast a_{n})\dots)).
\]
Then if $1\leq m<n$,
\[
a_{1}\ast a_{2} \ast\dots\ast a_{m}\ast(a_{m+1}\ast\dots\ast a_{n})=a_{1}\ast a_{2} \ast\dots\ast a_{n}.
\]
Erd\"{o}s and Rado proved in \cite{er}:
\begin{theorem}[Erd\"{o}s and Rado] For $r\geq2$ and $l\geq k\geq 2$ we have
\[
R_{k}(l;r)\leq r\ast r^{k-1}\ast r^{k-2}\ast \dots \ast r^{2}\ast[r(l-k)+1].
\]
\end{theorem}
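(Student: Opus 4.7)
The plan is to prove the bound by induction on $k\geq 2$, following the classical Erd\"{o}s--Rado stepping-up argument. For the base case $k=2$ the displayed tower collapses to $r\ast[r(l-2)+1]=r^{r(l-2)+1}$: fix a vertex $v_{1}$, partition the remaining vertices into $r$ color classes according to the color of the edge to $v_{1}$, pass to the largest class (of size $\geq\lceil(|N|-1)/r\rceil$), pick $v_{2}$ in it, and iterate for $r(l-2)+1$ steps; pigeonhole then yields $l-1$ of the picked vertices with a common extension color $\chi$, to which any further vertex of the residual set may be adjoined to produce a monochromatic $l$-set.

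For the inductive step, assume the bound for $k-1$ and let $|N|$ meet the claimed tower, with an $r$-coloring $c\colon[N]^{k}\rightarrow r$. I extract a distinguished sequence $x_{1},\dots,x_{m}$ with $m=r(l-k)+1$ and a descending chain of reservoirs $N=N_{0}\supseteq N_{1}\supseteq\cdots\supseteq N_{m}$ with $|N_{m}|\geq k-1$ as follows: at step $i$, pick $x_{i}\in N_{i-1}$ arbitrarily and form the derived coloring
\[
c_{i}\colon[N_{i-1}\setminus\{x_{i}\}]^{k-1}\rightarrow r,\qquad c_{i}(S)=c(\{x_{i}\}\cup S);
\]
by the inductive bound for $R_{k-1}(\cdot;r)$ one chooses a $c_{i}$-homogeneous $N_{i}\subseteq N_{i-1}\setminus\{x_{i}\}$ of the size the remaining $m-i$ steps will demand, with associated color $\chi_{i}\in r$. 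The key invariant is that $c(\{x_{i}\}\cup S)=\chi_{i}$ whenever $S\in[N_{j}]^{k-1}$ for some $j\geq i$. Once the $m$ steps are complete, apply pigeonhole to $\chi_{1},\dots,\chi_{m}$ to obtain $l-k+1$ indices $i_{1}<\cdots<i_{l-k+1}$ with a common value $\chi$, and adjoin any $k-1$ elements $y_{1},\dots,y_{k-1}$ of $N_{m}$. The $l$-element set $H=\{x_{i_{1}},\dots,x_{i_{l-k+1}}\}\cup\{y_{1},\dots,y_{k-1}\}$ is then $c$-homogeneous in color $\chi$: any $k$-subset of $H$ must contain at least one $x_{i_{j}}$ (since only $k-1$ of the $y$'s are available), and if $x_{i_{j_{0}}}$ is its minimum then the remaining $k-1$ entries all lie in $N_{i_{j_{0}}}$, so the color is $\chi_{i_{j_{0}}}=\chi$ by the invariant.

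The main obstacle is the bookkeeping that the tower $r\ast r^{k-1}\ast\cdots\ast r^{2}\ast[r(l-k)+1]$ is tall enough to sustain the $m=r(l-k)+1$ successive applications of $R_{k-1}$ while still leaving $|N_{m}|\geq k-1$. This is verified by reverse induction on $i$: the outermost ``$r\ast$'' is absorbed by the $r$-way pigeonhole on the $\chi_{i}$'s at the end, the inner sub-tower $r^{k-1}\ast\cdots\ast r^{2}\ast[r(l-k)+1]$ matches exactly the inductive instance of $R_{k-1}$ called for at the first stage, and the top integer of the remaining tower decreases in a controlled fashion at each subsequent stage, which the right-associated format absorbs at no cost in total height.
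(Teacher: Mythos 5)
The paper itself offers no proof of this theorem (it is quoted from \cite{er}), so your argument has to stand on its own, and the inductive step does not. Your base case $k=2$ is fine and does yield $R_{2}(l;r)\leq r^{r(l-2)+1}$. The problem is the induction: at each of the $m=r(l-k)+1$ stages you invoke the full $(k-1)$-dimensional Ramsey number on the current reservoir, so the reservoir sizes must satisfy $|N_{i-1}|-1\geq R_{k-1}(|N_{i}|;r)$ with $|N_{m}|\geq k-1$. Unwinding this backwards, $|N_{0}|$ is an $m$-fold iterate of the function $x\mapsto R_{k-1}(x;r)$, which for $k=3$ is already an exponential tower of height about $m=r(l-3)+1$, whereas the claimed bound $r\ast r^{2}\ast[r(l-3)+1]=r^{r^{2r(l-3)+2}}$ is a tower of height $3$. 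Concretely, for $k=3$, $r=2$, $l=5$ the theorem asserts $R_{3}(5;2)\leq 2^{4^{5}}=2^{1024}$, while your recursion demands $|N|\geq R_{2}(R_{2}(R_{2}(R_{2}(R_{2}(2;2)+1;2)+1;2)+1;2)+1;2)$, which already exceeds $2^{2^{4000}}$. The final ``bookkeeping'' paragraph is where this is papered over: it is not true that the inner sub-tower $r^{k-1}\ast\cdots\ast r^{2}\ast[r(l-k)+1]$ equals the instance of $R_{k-1}$ needed at the first stage (that instance involves the huge intermediate target $|N_{1}|$, not $l$), and the later stages are not absorbed ``at no cost in total height''; they each add a level to the tower.

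The actual Erd\H{o}s--Rado argument applies the $(k-1)$-dimensional bound only \emph{once}. One builds a single long pre-homogeneous (end-homogeneous) sequence $x_{1},\dots,x_{M}$ with nested reservoirs, where at step $i$ the reservoir is refined by the pigeonhole principle over the colors $c(T\cup\{y\})$ for the $\binom{i-1}{k-2}$ sets $T\in[\{x_{1},\dots,x_{i-1}\}]^{k-2}$ together with $x_{i}$ --- only the $(k-1)$-sets containing the newest point need attention --- at a multiplicative cost of $r^{\binom{i-1}{k-2}}$ per step, hence roughly $r^{\binom{M}{k-1}}$ in total. The color of any $k$-set from the sequence then depends only on its first $k-1$ elements, giving an induced $(k-1)$-coloring of $[\{x_{1},\dots,x_{M}\}]^{k-1}$, to which $R_{k-1}$ is applied a single time with $M=R_{k-1}(l-1;r)$ (adjoining one reservoir point at the end). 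This matches the stated tower exactly, because $r\ast r^{k-1}\ast\cdots\ast r^{2}\ast[r(l-k)+1]=r\ast\bigl[T^{k-1}\bigr]=r^{T^{k-1}}\geq r^{\binom{T}{k-1}}$ where $T=r\ast r^{k-2}\ast\cdots\ast r^{2}\ast[r((l-1)-(k-1))+1]$ is precisely the inductive bound for $R_{k-1}(l-1;r)$. To repair your write-up you would need to replace the repeated applications of $R_{k-1}$ by this one-shot pre-homogenization.
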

Therefore
\begin{equation}\label{upperbound}
s_{n}\leq 3^{n+1}\ast3^{2n+1}\ast(2.3^{n+1}+1).
\end{equation}

On the other hand by using the stepping-up lemma \cite{grs} of Erd\"{o}s and Hajnal we get a lower bound for $t_{n}$. In fact what we want is an off-diagonal version of the stepping-up lemma which can be obtained by easily modifying the proof of the original version mentioned in \cite{grs}. Thus we have
\begin{theorem}[Off-diagonal Stepping-up Lemma] Suppose $k\geq3$ and $n\nrightarrow(l_{1},l_{2})^{k}$, then $2^{n}\nrightarrow(2l_{1}+k-4,2l_{2}+k-4)^{k+1}$.
\end{theorem}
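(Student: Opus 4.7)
The plan is to adapt the standard Erd\"{o}s--Hajnal stepping-up technique from \cite{grs} to the asymmetric two-color setting. Fix a coloring $\chi\colon[n]^{k}\to\{0,1\}$ that witnesses $n\nrightarrow(l_{1},l_{2})^{k}$, that is, admitting no $0$-monochromatic $l_{1}$-set and no $1$-monochromatic $l_{2}$-set. I would identify the ground set of size $2^{n}$ with $\{0,1\}^{n}$ equipped with the lexicographic order, and for distinct binary strings $\epsilon\neq\eta$ write $\delta(\epsilon,\eta)\in\{0,\dots,n-1\}$ for the first coordinate on which they disagree. For any $(k+1)$-subset $A=\{\epsilon_{0}<_{\mathrm{lex}}\cdots<_{\mathrm{lex}}\epsilon_{k}\}$ define the auxiliary sequence $\delta_{i}=\delta(\epsilon_{i},\epsilon_{i+1})$ for $0\leq i\leq k-1$.

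I would then invoke the standard facts about $\delta$ proved in the original stepping-up argument: consecutive values $\delta_{i},\delta_{i+1}$ are always distinct; more generally $\delta(\epsilon_{i},\epsilon_{j})=\min_{i\leq t<j}\delta_{t}$ for $i<j$; and whenever $\delta_{0},\dots,\delta_{k-1}$ is strictly monotone its entries are pairwise distinct and therefore form a $k$-element subset of $\{0,\dots,n-1\}$. Using these, I define the lifted coloring $\chi'\colon[2^{n}]^{k+1}\to\{0,1\}$ by calling $A$ \emph{monotonic} if $\delta_{0},\dots,\delta_{k-1}$ is strictly increasing or strictly decreasing, and otherwise letting $j$ be the least internal index at which $\delta_{j}$ is a local extremum. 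Set $\chi'(A)=\chi\{\delta_{0},\dots,\delta_{k-1}\}$ when $A$ is monotonic; $\chi'(A)=1$ when $A$ is non-monotonic and $\delta_{j-1}<\delta_{j}>\delta_{j+1}$; and $\chi'(A)=0$ when $A$ is non-monotonic and $\delta_{j-1}>\delta_{j}<\delta_{j+1}$.

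To finish, suppose for contradiction that $B\subset\{0,1\}^{n}$ is $\chi'$-monochromatic of color $0$ with $|B|=2l_{1}+k-4$, and consider its induced $\delta$-sequence of length $|B|-1$. The hypothesis that every $(k+1)$-subset of $B$ receives color $0$ precludes, by the definition of $\chi'$, any local maximum in the initial segments of the $\delta$-subsequences, which via the key facts above forces the full $\delta$-sequence of $B$ to become strictly monotonic after trimming a bounded (at most $k-2$) number of terms from its two ends; this leaves a strictly monotonic stretch of length at least $l_{1}$. Reading off the corresponding $l_{1}$ strings in $B$ and forming the set of $\delta$-values between consecutive ones produces $l_{1}$ positions in $\{0,\dots,n-1\}$ every $k$-subset of which is $\chi$-colored $0$, contradicting the choice of $\chi$. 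The color-$1$ argument is entirely symmetric and yields the bound $2l_{2}+k-4$.

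The main obstacle will be the bookkeeping in the last step: precisely controlling how many terms at the two ends of the $\delta$-sequence of $B$ must be discarded before one is guaranteed a strictly monotonic run of length $l_{1}$ (respectively $l_{2}$), and verifying that the resulting monotonicity actually pulls back to a $\chi$-monochromatic $k$-uniform family. This is what produces the additive constant $k-4$ in the conclusion. Since this is the same combinatorial core as in the diagonal stepping-up lemma in \cite{grs}, only applied once per color, the off-diagonal version follows by an essentially verbatim adaptation.
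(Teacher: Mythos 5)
You should first note that the paper offers no proof of this lemma at all: it merely asserts that the off-diagonal statement ``can be obtained by easily modifying the proof of the original version'' in \cite{grs}, so your proposal, which carries out exactly that modification of the Erd\H{o}s--Hajnal stepping-up argument, is the intended route and there is no competing argument in the paper to compare against. Your lifted colouring (monotone $\delta$-sequences inherit $\chi$; non-monotone ones are coloured by whether the first local extremum is a peak or a valley) is the standard and correct one, and the exclusion of the ``wrong'' extremum type at every position $j$ of the $\delta$-sequence of $B$ with $1\le j\le |B|-k$ is right. The one claim that is false as written is that this forces the $\delta$-sequence to \emph{become strictly monotonic} after trimming at most $k-2$ terms from its two ends: a sequence with no local maximum at its internal positions need not be monotone even after trimming --- it can be a long decreasing run followed by a long increasing run with the valley in the middle, and such sequences genuinely occur. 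What the argument actually yields is that $\delta_0,\dots,\delta_{|B|-k+1}$ is \emph{unimodal}; since it has $2l_1-2$ terms, one of its two monotone arms has at least $\lceil(2l_1-1)/2\rceil=l_1$ terms, and it is this arm, not the whole trimmed sequence, that you pull back. This dichotomy is precisely where the factor $2$ in $2l_i+k-4$ comes from, so eliding it loses the content of the bound. Finally, the pull-back needs one more line than you give: a monotone run $\delta_a,\dots,\delta_b$ corresponds to $b-a+2$ elements of $B$ (not $b-a+1$), and to realize an \emph{arbitrary} $k$-subset of $\{\delta_a,\dots,\delta_b\}$ as the $\delta$-set of a monotone $(k+1)$-subset of $B$ you must adjoin one fixed extra element at the appropriate end of the run and use the identity $\delta(\epsilon_i,\epsilon_j)=\min_{i\le t<j}\delta_t$. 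These are repairable details of the same argument rather than a wrong approach.
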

By a simple coloring argument we can show that $R_{3}(l,4)>2l$, then the stepping-up lemma implies that $R_{5}(2l-1,7)>2^{2l}$. So we have $t_{n+1}=R_{5}(t_{n},7)>2^{t_{n}}$. Hence for $n>1$
\begin{equation}\label{lowerbound}
t_{n}>\mathrm{Tower}_{n}(7)
\end{equation}
where Tower$_{1}(x)=x$, Tower$_{n+1}(x)=2^{\mathrm{Tower}_{n}(x)}$. Now an easy computation through inequalities (\ref{upperbound}), (\ref{lowerbound}) would show that $t_{n}$ exceeds $s_{n}$ when $n$ is large enough.

\section{More on Set Mappings on 4-tuples}
In this section we deal with a set mappings $f\colon[\lambda]^{4}\rightarrow[\lambda]^{<\mu}$ with the additional property:
\begin{equation}\label{middle}
\forall x_{0}<x_{1}<x_{2}<x_{3}\in \lambda\,\,\,[f\{x_{0},x_{1},x_{2},x_{3}\}\cap(x_{1},x_{2})=\emptyset].
\end{equation}
As mentioned in the introduction the above property excludes both the set mappings in Section \ref{mainsection} and in the Komjath-Shelah paper. By slightly modifying Hajnal's proof for Theorem B (see the introduction) at some points we show that
\begin{theorem}
Let $\lambda\geq\mu^{+3}$ and the set mapping $f\colon[\lambda]^{4}\rightarrow[\lambda]^{<\mu}$ satisfies (\ref{middle}). Then $f$ has arbitrary large finite free sets.
\end{theorem}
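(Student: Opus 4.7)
The plan is to adapt Hajnal's proof of Theorem B (the case $k=3$, $\lambda\geq\mu^{+3}$), exploiting the middle property (\ref{middle}) to handle $k=4$ without raising the hypothesis to $\lambda\geq\mu^{+4}$. Proceed by induction on $n$: given $n<\omega$, construct a free set $\{\alpha_0<\alpha_1<\cdots<\alpha_{n-1}\}$ of size $n$. Fix a sufficiently large regular cardinal $\chi$ and build an increasing chain of elementary submodels $M_0\in M_1\in\cdots\in M_{n-1}$ of $(H(\chi),\in,<^*,f)$, each of cardinality $\mu^{+2}$, containing $\mu^{+2}$ as a subset and $f$ as an element, with $M_i\in M_{i+1}$. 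Since $\lambda\geq\mu^{+3}$ and $|M_i|=\mu^{+2}<\mathrm{cf}(\lambda)$, the ordinals $\delta_i:=\sup(M_i\cap\lambda)$ lie strictly below $\lambda$, and I pick $\alpha_i\in M_{i+1}\cap(\delta_i,\lambda)$ subject to genericity conditions over $M_i$ to be specified below.

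To verify that $H:=\{\alpha_0,\ldots,\alpha_{n-1}\}$ is free, fix a 4-subset $\{\alpha_{j_1},\alpha_{j_2},\alpha_{j_3},\alpha_{j_4}\}$ with $j_1<j_2<j_3<j_4$ and an index $i\notin\{j_1,j_2,j_3,j_4\}$. The middle property (\ref{middle}) immediately yields $f\{\alpha_{j_1},\alpha_{j_2},\alpha_{j_3},\alpha_{j_4}\}\cap(\alpha_{j_2},\alpha_{j_3})=\emptyset$, so the freeness condition is automatic whenever $j_2<i<j_3$. In the remaining cases---$i<j_2$ with $i\ne j_1$, or $i>j_3$ with $i\ne j_4$---the argument mirrors Hajnal's: the image $f\{\alpha_{j_1},\alpha_{j_2},\alpha_{j_3},\alpha_{j_4}\}$ has cardinality $<\mu$, and a suitable elementarity argument---using that one of $\alpha_{j_2},\alpha_{j_3},\alpha_{j_4}$ was chosen generically over a submodel containing $\alpha_i$ and the other three parameters---forces $\alpha_i$ to miss the image. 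The genericity of $\alpha_k$ over $M_k$ is imposed so as to avoid exactly the $M_k$-definable union of all images arising in this way, a set of size $<\mu$ by the hypothesis on $f$; it can be satisfied inside $M_{k+1}\cap(\delta_k,\lambda)$ because elementarity lifts this requirement into $M_{k+1}$.

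I expect the main obstacle to lie in organizing the genericity conditions so that they are compatible across all 4-tuple configurations simultaneously. In Hajnal's original proof for $k=3$ the three indices are symmetric and give rise to essentially one configuration type, while here the four indices split into two boundary and two middle positions, and so the raw combinatorics looks worse. The middle property (\ref{middle}) eliminates precisely the configurations $j_2<i<j_3$---the ones that would otherwise demand $\lambda\geq\mu^{+4}$ and which drive the Komjath--Shelah forcing construction of Section \ref{mainsection}---after which the remaining position types can be matched in one-to-one fashion with the cases that arise in Hajnal's $k=3$ argument, and his bookkeeping goes through with only cosmetic modifications.
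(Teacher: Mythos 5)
Your identification of the role of hypothesis (\ref{middle}) is exactly the one the paper exploits: for a $5$-tuple $x_{i_0}<x_{i_1}<x_{i_2}<x_{i_3}<x_{i_4}$ the only position requiring more than $\lambda\geq\mu^{+3}$ is the middle one, $x_{i_2}\in(x_{i_1},x_{i_3})$, which (\ref{middle}) disposes of for free, and the remaining four positions do match the case analysis of Hajnal's $k=3$ proof. The gap is in the mechanism you propose for those four positions. A forward recursion through an increasing chain of elementary submodels, with each $\alpha_k$ chosen ``generically'' over $M_k$, only handles the case where the omitted element $\alpha_i$ lies above all four arguments: there $f\{\alpha_{j_1},\dots,\alpha_{j_4}\}$ belongs to $M_i$, has size $<\mu\subseteq M_i$, hence is contained in $M_i$ and bounded below $\delta_i<\alpha_i$. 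In the cases $i<j_2$ and $j_3<i<j_4$, the set that some later $\alpha_{j_l}$ would have to avoid has the form $\{y:\alpha_i\in f\{\alpha_{j_1},\alpha_{j_2},\alpha_{j_3},y\}\}$ --- a preimage in an \emph{argument} slot of $f$, not an image --- and the hypothesis $|f(\bar{x})|<\mu$ gives no bound whatsoever on its size; it can contain a final segment of $\lambda$. Dually, choosing $\alpha_i$ itself to avoid ``the union of all images arising in this way'' is impossible, because that union ranges over all future choices of the $\alpha_{j_l}$'s, hence over $\lambda$-many tuples; it is not an $M_i$-definable set of size $<\mu$ as you assert. Already for $k=1$ this obstruction kills the naive forward construction: if $f(y)=\{\alpha_0\}$ for every $y\neq\alpha_0$, no generic later choice rescues a badly chosen $\alpha_0$.

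The paper therefore follows Hajnal's genuinely two-phase scheme rather than a one-pass construction. First one builds a decreasing chain $H_0\supset H_1\supset\cdots$ of sets of full size $\lambda$ together with blocks $A_m=\{x_{m\xi}:\xi<\mu^{+}\}\subset H_m$ satisfying condition (iii): $u\notin f\{x_{l_1\xi_1},x_{l_2\xi_2},x_{m\eta},v\}$ for all $l_1<l_2<m$ and all $u,v\in H_{m+1}$. This is obtained by applying the free-set theorem for set mappings on singletons to $v\mapsto\bigcup f\{x_{l_1\xi_1},x_{l_2\xi_2},x_{m\eta},v\}$, whose images have size at most $\mu^{+}$; this is where $\lambda\geq\mu^{+3}$ is needed, and it takes care of the two upper positions ($i>j_3$). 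The two lower positions ($i<j_2$) are then handled by a \emph{downward} recursion: $x_{n-1},x_{n-2},x_{n-3}$ are fixed first, and each earlier $x_m$ is drawn from a set free with respect to the singleton mapping $h_m(u)=\bigcup_{m+1<i_1<i_2<n}f\{u,x_{m+1},x_{i_1},x_{i_2}\}$, whose arguments involve only points already chosen. To repair your argument you would need to replace the submodel/genericity bookkeeping by these two devices (or something equivalent); once that is done, your outline coincides with the paper's proof.
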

\begin{proof} We assume that the reader is familiar with Hajnal's proof from (\cite{ehmr}, Section 46) and for his convenience we follow the same terminology. We construct the sets $A_{m}=\{x_{m\xi}:\xi<\mu^{+}\}$ $(m<\omega)$ where $x_{m\xi}$ are pairwise distinct elements of $\lambda$ and a sequence $H_{0}\supset H_{1}\supset\dots\supset H_{m}\supset\dots$$(m\in\omega)$ with the following properties:\\
\begin{itemize}
\item[(i)]$|H_{m}|=\lambda$, $\{x_{m\xi}:\xi<\mu^{+}\}\subset H_{m},$\\
\item[(ii)]$A_{m}<A_{m+1}$,\\
\item[(iii)]$u\notin f\{x_{l_{1}\xi_{1}},x_{l_{2}\xi_{2}},x_{m\eta},v\}$ holds whenever $l_{1}<l_{2}<m<\omega$ and $\xi_{1},\xi_{2},\eta<\mu^{+}$ and $u,v\in H_{m+1}$.\\
\end{itemize}
This can be done exactly as in Hajnal's proof. The extra condition (ii) can easily be fixed by putting $A_{m}$ as the set of the first $\mu^{+}$ elements of $H_{m}$. After doing this we construct an $n$-element set $\{x_{i}:i<n\}$ that is free with respect to $f$ such that $x_{i}\in A_{i}$. Simultaneously with the construction of this set, we also construct sequences of sets $E_{j}^{i}\subset A_{j}$ for $i\leq n-3$ and $j\leq i$ and we would have $x_{i}\in E^{i}_{i}$ for $i\leq n-3$. For $i=n-1,n-2,n-3$ pick $x_{i}\in A_{i}$ arbitrarily, and write $E^{n-3}_{j}=A_{j}$ for $j\leq n-3$. Given $m<n-3$, if $x_{i}$ and $E^{i}_{j}$ has already been defined in case $m<i<n$, $j\leq i$, then define the set mapping $h_{m}$ on $E^{m+1}=\bigcup_{j\leq m}E_{j}^{m+1}$ by putting
\[
h_{m}(u)=\displaystyle{\bigcup_{m+1<i_{1}<i_{2}<n}}f\{u,x_{m+1},x_{i_{1}},x_{i_{2}}\}.
\]
Clearly $|h_{m}(u)|<\mu$ and so by using a lemma quoted in the proof of Hajnal (see \cite{ehmr}, Section 46), there is a set $X_{m}$ free with respect to $h$ such that $X_{m}\cup E^{m+1}_{j}$ has cardinality $\mu$ for each $j\leq m$. Put
\[
E^{m}_{j}=X_{m}\cup E^{m+1}_{j}
\]
for $j\leq m$. Pick an arbitrary element of $E_{m}^{m}$ as $x_{m}$. This finishes the construction.
Now we see that this set is free with respect to $f$. By (ii) we have $x_{0}<x_{1}<\dots<x_{n-1}$. Let
\[
\{x_{i_{0}},x_{i_{1}},x_{i_{2}},x_{i_{3}},x_{i_{4}}\}\subset\{x_{i}:i< n\}
\]
where $i_{0}<i_{1}<i_{2}<i_{3}<i_{4}$. Since $x_{i_{3}},x_{i_{4}}\in H_{i_{2}+1}$, it follows from (i) and (iii) that
\[
x_{i_{3}}\notin f\{x_{i_{0}},x_{i_{1}},x_{i_{2}},x_{i_{4}}\},\,\,\,x_{i_{4}}\notin f\{x_{i_{0}},x_{i_{1}},x_{i_{2}},x_{i_{3}}\}.
\]
Moreover we have
\[
x_{i_{0}}\notin f\{x_{i_{1}},x_{i_{2}},x_{i_{3}},x_{i_{4}}\},\,\,\,x_{i_{1}}\notin f\{x_{i_{0}},x_{i_{2}},x_{i_{3}},x_{i_{4}}\},
\]
since $x_{i_{1}}, x_{i_{2}}\in E^{i_{2}-1}$ and this latter is free with respect to $h_{i_{2}}$. Finally
\[
x_{i_{2}}\notin f\{x_{i_{0}},x_{i_{1}},x_{i_{3}},x_{i_{4}}\}
\]
follows from $x_{i_{0}}<x_{i_{1}}<x_{i_{2}}<x_{i_{3}}<x_{i_{4}}$ and the condition (\ref{middle}) imposed on $f$. Now the proof is complete.
\end{proof}

\section{Set Mappings on Triples}

Komjath-Shelah's second construction in \cite{shelah645} implies that for infinite cardinals $\lambda$, $\mu$, where $\mu$ is regular, $\lambda=\mu^{+n}$, $n\in\omega$ and GCH holds for $\mu^{+l} (l<n)$, there is a cardinal preserving generic extension in which there exists a set mapping $f\colon[\lambda]^{3}\rightarrow[\lambda]^{<\mu}$ with no free set of size $\omega$. The set mapping in question essentially has the following property:
\[
\forall x_{0},x_{1},x_{2}\in \lambda \bigr{[}x_{0}<x_{1}<x_{2}\rightarrow f\{x_{0},x_{1},x_{2}\}\subset x_{0}\bigr{]}.
\]

Also it is easy to see that the proofs of Theorems 4.1 and 4.2 in Hajnal-M\'{a}t\'{e} paper \cite{hajnalmate} for set mappings on pairs can be adapted for set mappings on triples and then we have the following theorem.

\begin{theorem}\label{hajnalmaterestricted} Let $\mu,\lambda$ be infinite cardinals and consider the set mapping $f\colon[\lambda]^{3}\rightarrow[\lambda]^{<\mu}$,
\begin{itemize}
\item[(i)] if $\lambda$ is regular, $\lambda=\mu$ and for all $x_{0}<x_{1}<x_{2}\in\lambda$, $f\{x_{0},x_{1},x_{2}\}\subset (x_{2},+\infty)$, then $f$ has a free set of cardinality $\lambda$.
\item[(ii)] if $\lambda=\mu^{+n}(n\geq 1)$ and  for all $x_{0}<x_{1}<x_{2}\in\lambda$, $f\{x_{0},x_{1},x_{2}\}\subset (x_{1},x_{2})$, then $f$ has a free set of size $\omega$.
\end{itemize}
\end{theorem}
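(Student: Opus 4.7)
The plan is to adapt Hajnal--M\'at\'e's transfinite recursion arguments for pairs \cite{hajnalmate} to the triple setting, using Theorem A (Hajnal's singleton set-mapping theorem from the Introduction) as the main building block at each stage of both parts.

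For part (i), I would construct $\langle x_{\alpha}:\alpha<\lambda\rangle$ by transfinite recursion: at stage $\alpha$, pick $x_{\alpha}>\sup_{\beta<\alpha}x_{\beta}$ lying outside the forbidden set
\[
B_{\alpha}=\bigcup\bigl\{f\{x_{\beta_0},x_{\beta_1},x_{\beta_2}\}:\beta_0<\beta_1<\beta_2<\alpha\bigr\}.
\]
Regularity of $\lambda=\mu$ bounds $B_{\alpha}$ by a union of at most $|\alpha|^3<\lambda$ sets each of size $<\lambda$, hence $|B_{\alpha}|<\lambda$; likewise $\sup_{\beta<\alpha}x_{\beta}<\lambda$, so the choice is possible. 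The location hypothesis $f\{x_0,x_1,x_2\}\subset(x_2,+\infty)$ then yields freeness automatically: if $x_{\gamma}\in f\{x_{\beta_0},x_{\beta_1},x_{\beta_2}\}$ for $\beta_0<\beta_1<\beta_2$, then $\gamma>\beta_2$, but $x_{\gamma}$ was explicitly excluded from $B_{\gamma}$ at stage $\gamma$.

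For part (ii), I would inductively construct, on $p<\omega$, both the elements $x_0<x_1<\cdots$ and a descending chain of reservoirs $H_0\supset H_1\supset\cdots$ each of cardinality $\lambda$, with $x_p\in H_p$, $H_{p+1}\subset H_p\cap(x_p,+\infty)$, and satisfying the invariant
\[
(\dagger_p)\qquad(\forall l<p)(\forall u,v\in H_{p+1},\,u\neq v)\bigl[u\notin f\{x_l,x_p,v\}\bigr].
\]
At the inductive step, the singleton set mapping $g_p(v)=\bigcup_{l<p}f\{x_l,x_p,v\}$ on $H_p\cap(x_p,+\infty)$ has $|g_p(v)|<\mu$ (a finite union of sets of size $<\mu$). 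Since $\mu<\lambda$, Theorem A produces a free set $H_{p+1}\subset H_p\cap(x_p,+\infty)$ of cardinality $\lambda$---which is precisely $(\dagger_p)$---and I take $x_{p+1}\in H_{p+1}$ arbitrarily. To verify freeness of $\{x_p:p<\omega\}$, given $i_0<i_1<i_2$ the location hypothesis $f\{x_{i_0},x_{i_1},x_{i_2}\}\subset(x_{i_1},x_{i_2})$ confines any possible bad element to some $x_m$ with $i_1<m<i_2$; but $x_m,x_{i_2}\in H_{i_1+1}$ by the nesting of the reservoirs, so $(\dagger_{i_1})$ applied with $l=i_0$, $u=x_m$, $v=x_{i_2}$ rules this out.

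The main subtlety is the formulation of $(\dagger_p)$: it must be strong enough to force freeness of the limit sequence yet local enough to be preserved by a single application of Theorem A. The restriction $f\{x_0,x_1,x_2\}\subset(x_1,x_2)$ is what makes this ``two-indices-at-a-time'' invariant sufficient, since it squeezes any threatening $x_m$ in a bad triple $\{x_{i_0},x_{i_1},x_{i_2}\}$ into the interval $(x_{i_1},x_{i_2})$, placing $x_m$ in the reservoir $H_{i_1+1}$ where $(\dagger_{i_1})$ is available.
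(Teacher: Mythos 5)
Your proposal is correct, and it is exactly the route the paper intends: the paper gives no details for this theorem, saying only that the proofs of Theorems 4.1 and 4.2 of Hajnal--M\'at\'e adapt to triples, and your two arguments (the greedy transfinite recursion avoiding the union of earlier images for (i), and the recursive shrinking of $\lambda$-sized reservoirs via Hajnal's free-set theorem for singleton set mappings for (ii)) are precisely that adaptation. Both the cardinality estimates and the use of the location hypotheses to localize the potentially bad element check out.
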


It is interesting to note that recently Komjath in \cite{komjath-preprint} has proved several results of these kinds for a general set mapping on $k$-tuples.

Now it remains to see what happens if a set mapping $f\colon[\lambda]^{3}\rightarrow[\lambda]^{<\mu}$ satisfies the following property:
\[
\forall x_{0}<x_{1}<x_{2}\in\lambda\,\, \bigr{[}f\{x_{0},x_{1},x_{2}\}\subset (x_{0},x_{1})\bigr{]}.
\]

The following theorem can be proved by an adaptation of Komjath-Shelah's second construction. Here we show that our forcing construction in Section \ref{mainsection} will enable us to give another proof for this case. In fact we show that it is consistent that the above $f$ has no free set of size $\omega$. In precise terms:

\begin{theorem}\label{k=3}
Assume that $n<\omega, k=3$ and $\lambda=\mu^{+n}$ for some regular cardinal $\mu$. Let $\mathbb{P}_{n}=\mathbb{P}^{\mu,\lambda}_{n}$ be as in Theorem \ref{main}. Then in $V^{\mathbb{P}_{n}}$ there is a set mapping $F\colon[\lambda]^{3}\rightarrow[\lambda]^{<\mu}$ such that $F$ has no infinite free set and for all $x_{0}<x_{1}<x_{2}\in\lambda$ we have $F\{x_{0},x_{1},x_{2}\}\subset(x_{0},x_{1})$.
\end{theorem}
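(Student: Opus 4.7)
The plan is to take for $F$ the natural $3$-generated set mapping from Theorem \ref{main} intersected with the required interval, and to rule out an infinite free set by combining the infinite Ramsey theorem with the well-foundedness of the ordinals.

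Work in $V^{\mathbb{P}_n}$ and let $\rho_0,\ldots,\rho_n$ and $\Gamma=\{\rho_0,\ldots,\rho_n\}$ be as supplied by Theorem \ref{main} (applied with $k=3$); let $F^{(3)}=F_{\,\Gamma}$ be the $3$-generated set mapping. I would define
\[
F\{x_0,x_1,x_2\}:=F^{(3)}\{x_0,x_1,x_2\}\cap(x_0,x_1)\qquad(x_0<x_1<x_2).
\]
By Theorem \ref{main}, $|F\{x_0,x_1,x_2\}|\le|F^{(3)}\{x_0,x_1,x_2\}|<\mu$, and the interval property $F\{x_0,x_1,x_2\}\subseteq(x_0,x_1)$ is built into the definition.

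Now suppose, toward contradiction, that $A\subseteq\lambda$ is an infinite set free for $F$. For each $\{\beta_1<\beta_2<\beta_3\}\in[A]^3$ and each $\rho_i$, the triple has a well-defined $\rho_i$-type in $\{1,2,3\}$ in the sense introduced before the $k=4$ theorem, so coloring $[A]^3$ by the tuple $(t_0,\ldots,t_n)$ of $\rho_i$-types is a finite partition into at most $3^{n+1}$ classes. The infinite Ramsey theorem produces an infinite $A'\subseteq A$ on which this coloring is constant, with common value $(t_0,\ldots,t_n)$.

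The crucial observation, which is the step I expect to be the main obstacle (and its resolution), is that no $t_i$ can equal $3$. Indeed, if $t_i=3$, then writing $A'=\{\alpha_1<\alpha_2<\cdots\}$ and looking at the consecutive triples $\{\alpha_j,\alpha_{j+1},\alpha_{j+2}\}$, the fact that type $3$ is assigned only when types $1$ and $2$ both fail forces the strict inequality $\rho_i\{\alpha_j,\alpha_{j+1}\}>\rho_i\{\alpha_{j+1},\alpha_{j+2}\}$ for every $j$. This is a strictly decreasing $\omega$-sequence in the ordinal $\mathrm{range}(\rho_i)\subseteq\mu^{+(n-i)}$, which is impossible. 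Hence every $t_i\in\{1,2\}$.

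Finally, choose any four elements $\alpha_1<\alpha_2<\alpha_3<\alpha_4$ in $A'$. A short case analysis on $t_i\in\{1,2\}$, using only the types of the triples $\{\alpha_1,\alpha_2,\alpha_3\}$, $\{\alpha_1,\alpha_2,\alpha_4\}$, $\{\alpha_2,\alpha_3,\alpha_4\}$, shows that $\rho_i\{\alpha_2,\delta\}\le\max\{\rho_i\{\alpha_1,\alpha_3\},\rho_i\{\alpha_1,\alpha_4\},\rho_i\{\alpha_3,\alpha_4\}\}$ for every $\delta\in\{\alpha_1,\alpha_3,\alpha_4\}$, i.e., $\alpha_2$ is $\rho_i$-close to $\{\alpha_1,\alpha_3,\alpha_4\}$ for every $i$. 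Therefore $\alpha_2\in F^{(3)}\{\alpha_1,\alpha_3,\alpha_4\}$, and since $\alpha_2\in(\alpha_1,\alpha_3)$ we conclude $\alpha_2\in F\{\alpha_1,\alpha_3,\alpha_4\}\cap A'$, contradicting the freeness of $A$. This finishes the plan.
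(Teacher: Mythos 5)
Your proof is correct and follows essentially the same route as the paper's: apply Ramsey's theorem to $[A]^{3}$ colored by the comparison pattern of the $\rho_{i}$-values on the pairs of a triple, use well-foundedness of the ordinals to exclude the decreasing pattern, and then exhibit the second of four elements of the homogeneous set as $\rho_{i}$-close to the remaining three, contradicting freeness. The only differences are in packaging --- you apply Ramsey once with the $3^{n+1}$-coloring by $\rho_{i}$-types borrowed from the $k=4$ argument where the paper applies $\omega\rightarrow(\omega)^{3}$ three times with binary colorings (the resulting homogeneous configurations are equivalent), and you explicitly secure the containment $F\{x_{0},x_{1},x_{2}\}\subset(x_{0},x_{1})$ by intersecting the generated map with that interval, a point the paper's proof leaves implicit even though the witness it produces does land in that interval.
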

\begin{proof}
By Theorem \ref{main}, in $V^{\mathbb{P}_{n}}$ there are functions $F\colon[\lambda]^{3}\rightarrow[\lambda]^{<\mu}$ and
\[
\rho_{0}\colon[\lambda]^{2}\rightarrow\mu^{+n},\,\dots\,,\,\rho_{n}\colon[\lambda]^{2}\rightarrow\mu
\]
such that $F$ is $3$-generated by $\Gamma=\{\rho_{0},\dots,\rho_{n}\}$. We show that $F$ has no infinite free set. By way of contradiction suppose not and let $A\subset\lambda$, $|A|=\omega$, be a free set for $F$. For every $\{\alpha,\beta,\gamma\}\in [A]^{3}$ with $\alpha<\beta<\gamma$ and every $\rho\in\{\rho_{0},\dots,\rho_{n}\}$ we have two possibilities:
\begin{center}
(Ia) $\rho\{\alpha,\beta\}\leq\rho\{\beta,\gamma\}$,\,\,\,\,\,\,(Ib) $\rho\{\alpha,\beta\}>\rho\{\beta,\gamma\}$.
\end{center}
According to which one of the above possibilities occurs we say that the 4-tuple $\langle\rho;\alpha,\beta,\gamma\rangle$ satisfies that possibility. Now we define an equivalence relation on $[A]^{3}$. For $\alpha<\beta<\gamma, \alpha^{'}<\beta^{'}<\gamma^{'}$ in $A$, we put $\{\alpha\beta\gamma\}\sim\{\alpha^{'}\beta^{'}\gamma^{'}\}$ when for each $\rho\in\Gamma$, either both of $\langle\rho;\alpha,\beta,\gamma\rangle$, $\langle\rho;\alpha^{'},\beta^{'},\gamma^{'}\rangle$ satisfy (Ia) or both of them satisfy (Ib). By the Ramsey theorem $\omega\rightarrow(\omega)^{3}$ there exists an infinite subset
\[
B=\{\beta_{0},\beta_{1},\beta_{2}\dots\}\subset A,\,\,\beta_{0}<\beta_{1}<\beta_{2}<\dots
\]
such that either all of $\langle\rho;\alpha,\beta,\gamma\rangle$ in question satisfy (Ia) or all of them satisfy (Ib). We claim this cannot be (Ib), since in this case we would have
\[
\rho\{\beta_{0},\beta_{1}\}>\rho\{\beta_{1},\beta_{2}\}>\dots
\]
which is an infinite sequence of strictly decreasing ordinals.

Again for every $\{\alpha,\beta,\gamma\}\in [B]^{3}$ with $\alpha<\beta<\gamma$ and every $\rho\in\{\rho_{0},\dots,\rho_{n}\}$ we have two possibilities:
\begin{center}
(IIa) $\rho\{\alpha,\beta\}\leq\rho\{\alpha,\gamma\}$,\,\,\,\,\,\,(IIb) $\rho\{\alpha,\beta\}>\rho\{\alpha,\gamma\}$.
\end{center}
We define an equivalence relation on $[B]^{3}$. For $\alpha<\beta<\gamma, \alpha^{'}<\beta^{'}<\gamma^{'}$ in $B$, we put $\{\alpha\beta\gamma\}\sim\{\alpha^{'}\beta^{'}\gamma^{'}\}$ when for each $\rho\in\Gamma$, either both of $\langle\rho;\alpha,\beta,\gamma\rangle$, $\langle\rho;\alpha^{'},\beta^{'},\gamma^{'}\rangle$ satisfy (IIa) or both of them satisfy (IIb). As in the previous case by the Ramsey theorem $\omega\rightarrow(\omega)^{3}$  we deduce that there is an infinite subset
\[
C=\{\gamma_{0},\gamma_{1},\gamma_{2}\dots\}\subset B,\,\,\gamma_{0}<\gamma_{1}<\gamma_{2}<\dots
\]
such that either all of $\langle\rho;\alpha,\beta,\gamma\rangle$ in question satisfy (IIa) or all of them satisfy (IIb). This cannot be (IIb), since we would have the following infinite sequence of strictly decreasing ordinals
\[
\rho\{\gamma_{0},\gamma_{1}\}>\rho\{\gamma_{1},\gamma_{2}\}>\dots.
\]
So far we have obtained that for every $\{\alpha\beta\gamma\}\in [C]^{3}$ and every $\rho\in\Gamma$ both of the following two items occur:
\begin{center}
(Ia) $\rho\{\alpha,\beta\}\leq\rho\{\beta,\gamma\}$,\,\,\,\,\,\,(IIa) $\rho\{\alpha,\beta\}\leq\rho\{\alpha,\gamma\}$.
\end{center}

Now for every $\{\alpha,\beta,\gamma\}\in [C]^{3}$ with $\alpha<\beta<\gamma$ and every $\rho\in\{\rho_{0},\dots,\rho_{n}\}$ we have two possibilities:
\begin{center}
(IIIa) $\rho\{\beta,\gamma\}>\rho\{\alpha,\gamma\}$,\,\,\,\,\,\,(IIIb) $\rho\{\beta,\gamma\}\leq\rho\{\alpha,\gamma\}$.
\end{center}
As in the two previous cases we define an equivalence relation on $[C]^{3}$. For $\alpha<\beta<\gamma, \alpha^{'}<\beta^{'}<\gamma^{'}$ in $C$, we put $\{\alpha\beta\gamma\}\sim\{\alpha^{'}\beta^{'}\gamma^{'}\}$ when for each $\rho\in\Gamma$, both of $\langle\rho;\alpha,\beta,\gamma\rangle$ and $\langle\rho;\alpha^{'},\beta^{'},\gamma^{'}\rangle$ satisfy the same possibility. Using the Ramsey theorem $\omega\rightarrow(\omega)^{3}$ for the third time we obtain an infinite homogenous subset $D\subset C$. Now let $\{\alpha,\beta,\gamma,\gamma^{'}\}\subset D$, $\alpha<\beta<\gamma<\gamma^{'}$. We show that for every $\rho\in\Gamma$, $\beta$ is $\rho$-close to $\{\alpha,\gamma,\gamma^{'}\}$. Fix an arbitrary $\rho\in\Gamma$. By (Ia) we have $\rho\{\beta,\gamma\}\leq\rho\{\gamma,\gamma^{'}\}$. Observe that if (IIIa) occurs, then $\rho\{\beta,\gamma^{'}\}<\rho\{\gamma,\gamma^{'}\}$ and if (IIIb) occurs, then $\rho\{\beta,\gamma^{'}\}\leq\rho\{\alpha,\gamma^{'}\}$. This shows that in both cases of (IIIa) and (IIIb), $\beta$ is $\rho$-close to $\{\alpha,\gamma,\gamma^{'}\}$. Since $\rho$ was arbitrary, we conclude that $\beta\in F\{\alpha,\gamma,\gamma^{'}\}$, which violates the freeness of $A$, a contradiction.
\end{proof}

\subsection*{Acknowledgment} We would like to thank Peter Komjath for letting us know about his paper and also the papers of Gillibert and Wehrung. We would also like to thank the referee for his interesting comment.

\bibliography{reference}
\bibliographystyle{plain}
\end{document}